\newtheorem{theorem}{Theorem}
\newtheorem{lemma}{Lemma}
\newtheorem{remark}{Remark}
\newtheorem{corollary}{Corollary}
\newtheorem{defi}{Definition}
\newcommand{\C}{\mathcal{C}}
\newcommand{\G}{\mathcal{G}}
\pgfplotsset{compat=1.17}
\begin{document}

\author{
\name{Francesco Lo Iudice \textsuperscript{a}\thanks{For any correspondance on this work contact Francesco Lo Iudice, Email: francesco.loiudice2@unina.it.}, Anna Di Meglio\textsuperscript{a}, Fabio Della Rossa\textsuperscript{b}, and Francesco Sorrentino\textsuperscript{c}}
\affil{\textsuperscript{a}Department of Information Technology and Electrical Engineering, University of Naples Federico II, 80125 Naples, Italy; \textsuperscript{b}Department of Electronics, Information, and Bioengineering, Politecnico of Milan, Italy;
\textsuperscript{c}Department of Electronics, Information, and Bioengineering, Politecnico of Milan, Italy; Department of Mechanical Engineering, MSC01 1150 1 University of New Mexico, Albuquerque, NM 87131, USA }
}

\title{Controlling consensus in networks with symmetries}

\maketitle

\begin{abstract}
We study networks with linear dynamics where the presence of symmetries of the pair $(A,B)$, induces a partition of the network nodes in clusters and the matrix $A$ is not restricted to be in Laplacian form. For these networks, an invariant \emph{group consensus subspace} can be defined, in which the nodes in the same cluster evolve along the same trajectory in time. We prove that the network dynamics is uncontrollable in directions orthogonal to this subspace. Under the assumption that the dynamics parallel to this subspace is controllable, we design optimal controllers that drive the group consensus dynamics towards a desired state. Then, we consider the problem of selecting additional control inputs that stabilize the group consensus subspace and obtain bounds on the minimum number of additional inputs and driver nodes needed to this end. Altogether, our results indicate that it is possible to design \emph{independently} the control action along and transverse to the group consensus subspace.
\end{abstract}

\section{Introduction}

The number of real-word systems modeled as complex networks is ever increasing, and ranges from natural \citep{VaSo:96,SeEa:09}, technological, \citep{StTj:16,YuZh:12} and social systems \citep{PrMa:15,DeDi:18} to epidemic spreading \citep{GaBe:20}. The ultimate goal of being able to arbitrarily affect the behavior of these systems has spurred researchers across different scientific communities to investigate the controllability properties of linear complex networks \citep{PaZa:14,LoSo:19,YuZh:13}. In this framework, several works \citep{LiSl:11,LoGa:15} have revisited the classical tools of structural controllability \citep{Lin:74} from the viewpoint that in order to control complex networks, controllability must be guaranteed by a proper selection of the set of nodes (the driver node set in which control signals are injected. If the selection of the driver nodes ensures structural controllability, then the network will also be controllable in Kalman's sense for all possible edge weights but for a set of Lebesgue measure zero. Among the combinations of edge weights inside this set, there are those that induce the emergence of symmetries \citep{ChMe:14,ChMe:15} or equitable partitions \citep{GaFr:19} in the network graph. In the presence of symmetries, there exist permutations of the network nodes that leave the graph unchanged, and these symmetries induce a partition of the network in clusters. On the other hand, an equitable partition \citep{Godsil:97} clusters the network nodes such that the sum of the incoming edges in any node of the same cluster from nodes in any cluster is the same. While symmetries and equitable partitions cause loss of controllability \citep{AgGh:17}, they also induce the emergence of group consensus \citep{peso14,BlHu:19}, i.e., solutions in which the state of each node in the same cluster is the same. 

In this work we focus on networks with symmetries and we show that loss of controllability and emergence of group consensus are different sides of the same coin. Both are due to the presence of symmetry-induced invariant subspaces that are smaller than the entire state space. While these subspaces allow group consensus solutions to emerge, we also show that they encompass the network controllable subspace. Altoghether, our results show that while the dynamics orthognal to the group consensus subspace is not controllable, it is possible that the dynamics along this subspace can be controlled. If this is the case, control of the consensus solution can be achieved by designing controllers on a reduced network, whose nodes correspond to clusters of nodes of the original network, yielding a substantial computational advantage in the control design. 

Stabilizability of the dynamics orthogonal to the group consensus subspace is a necessary requirement to achieve group consensus, and is not guaranteed when the network dynamic matrix is not in the form of a Laplacian matrix, which is the case considered in this paper. Hence, in order to be able to stabilize the group consensus subspace, additional inputs must be added to the network. Here, we show how to perform a selection that allows independent design of the control action on the group consensus subspace and of the stabilizing action transverse to the subspace. We also give bounds on the number of indipendent inputs and on the number of nodes where these inputs must be injected, the \emph{drivers}, to achieve stabilizability of the group consensus subspace.

\section{Mathematical Preliminaries and Network Dynamics}
We denote by $\mathcal{G}(\mathcal{V},\mathcal{E})$ an undirected graph with $\mathcal{V} = \lbrace v_i, \ i=1,\dots,N\rbrace$ the set of $N$ nodes, and $\mathcal{E}\subseteq \mathcal{V}\times \mathcal{V}$ the set of edges defining the interconnections among the nodes. The symmetric binary matrix $A\in \mathbb{R}^{N\times N}$ is the adjacency matrix of the graph, that is, a matrix  whose elements are $A_{ij} = A_{ji} =1$ if $(i,j)\in \mathcal{E}$ and $A_{ij}= A_{ji}=0$ otherwise. A permutation $\pi(\mathcal{V}) = \widetilde{\mathcal{V}}$ is an automorphism (or symmetry) of $\mathcal{G}$ if (i) $\mathcal{V}=\widetilde{\mathcal{V}}$, i.e., $\pi$ does not add or remove nodes, and (ii) $(i,j)\in \mathcal{E}$, then $(\pi(i),\pi(j))\in \mathcal{E}$. The set of automorphisms of a graph with adjacency matrix $A$, with the operation composition, is the automorphism group, $aut(\G(A))$. Any permutation of this group can be represented by a permutation matrix $P$ that commutes with $A$, i.e., such that $PA=AP$. The set of all automorphisms in the group will only permute certain subsets of nodes (the \emph{orbits} or \emph{clusters}) among each other.  
%Hence, the permutation group of a graph induces a partition of the network nodes in subsets which we will denote as \emph{orbits} or \emph{clusters}.
For any two nodes in the same orbit there exists a permutation that maps them into each other.
Moreover, the \emph{coarsest orbital partition} is defined as the partition of the nodes corresponding to the orbits of the automorphism group. Given a partition $\Pi$ of the set $\mathcal{V}$ of the network nodes $\mathcal{V}$ into $s$ subsets $\lbrace S_1, S_2,... S_s\rbrace$, such that  $\cup_{i=1}^s S_i=\mathcal{V}$, $S_i \cap S_j =\emptyset $ for $i \neq j$, we can introduce the $N \times s$ indicator matrix $E^{\Pi}$, such that $E_{ij}^{\Pi}=1$ if node i belongs to $S_j$ and $E_{ij}^{\Pi}=0$ otherwise.

%As a result, we can define the coarsest orbital partition as that encompassing the minimum number of clusters.
We consider a linear dynamical network described by
\begin{equation}\label{eq:contr_net}
\dot x = Ax + Bu.
\end{equation}
where $x\in \mathcal{X} = \mathbb{R}^N$ is the stack vector of the states of the $N$ network nodes and $u$ is the stack vector of the $M$ input signals injected in the network. Consistently, the $N\times N$ symmetric matrix $A$ defines the network topology, while the $N\times M$ matrix $B$ describes the way in which the $M$ input signals affect the network dynamics. Namely, if the $j$-th input is injected in the $i$-th node then $B_{ij} = 1$, while $B_{ij} = 0$ otherwise.
\section{Controllability Properties of Networks with Symmetries}\label{sec:contr_sym}
In this section, we will show how the presence of symmetries in the controlled network \eqref{eq:contr_net} affects controllability.
\begin{lemma}
The subset of automorphisms of $\G(A)$ given by the set of matrices $\mathcal{P}:=\lbrace P_i: \ P_iA = AP_i \ \mathrm{and} \  \ P_iB = B\rbrace$ forms a subgroup of $aut(\G(A))$. 
\end{lemma}
\begin{proof}
For the set $\mathcal{P}$ to be a subgroup, the following four properties must be true:
\begin{enumerate}[(i)]
\item $P_i(P_jP_k)=(P_iP_j)P_k$ $\forall$ $(P_i,P_j,P_k)$ $\in \mathcal{P}$;
\item $P_i\in \mathcal{P}$ is non singular $\forall \; i$;
\item $I\in \mathcal{P}$;
\item given any two matrices $P_i\in \mathcal{P}$ and $P_j\in \mathcal{P}$, then $P_iP_j\in \mathcal{P}$.
\end{enumerate}
Proving that the matrices in $\mathcal{P}$ satisfy property (i) and (ii) is trivial as (i) is true for any three square matrices with the same dimensions $(P_i,P_j,P_k) \in \mathcal{P}$ regardless of whether these are, or are not, in $\mathcal{P}$, while (ii) is true as permutation matrices are not singular.
Moreover, (iii) holds as $IA = AI = A$, and $IB = B$. Moreover, property (iv) is proved as 
$$(P_i P_j) A = P_i (P_j A)= P_i (A P_j)= A P_i P_j= A(P_i P_j)$$
from which we see that $P_iP_jA = AP_jP_i$ for all $(P_i,P_j) \in \mathcal{P}$. The proof is finally completed by noting that, as from our hypotheses $P_jB = P_iB = B$ for all $(P_i,P_j) \in \mathcal{P}$, it follows that $P_iP_jB = P_iB = B$.
\end{proof}

%In order to prove property (iv), we need to show that for any pair of matrices $(P_i,P_j) \in \mathcal{P}$, we have that $P_iP_jA = AP_jP_i$ and $P_iP_jB = B$. Indeed, we have that
We will denote as $aut(\G(A,B))$ the group represented by the permutation matrices $P$ such that $PA-AP = 0$ and $PB-B = 0$. Similarly to $aut(\G(A))$, $aut(\G(A,B))$ partitions the set of network nodes into orbits or clusters, where an orbit is a subset of symmetric nodes. Hence, we can define the coarsest orbital partition $\Pi$ into clusters corresponding to the orbits of the automorphism group $aut(\G(A,B))$, $C_1, C_2, \dots, C_K$, such that  $\cup_{i=1}^K C_i = \mathcal{V}$, and $C_i\cap C_j =0$ for $i \neq j$. We will use the indicator matrix $E^{\Pi}$ to keep track of the orbit to which each node belongs. 
\begin{lemma}
Each orbit of the coarsest partition $\Pi$ induced by $aut(\G(A,B))$ is a subset of an orbit of the coarsest partition induced by $aut(\G(A))$. 
\end{lemma}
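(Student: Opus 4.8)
The plan is to exploit the subgroup relationship established in the preceding lemma together with the elementary fact that, for group actions on a common set, the orbits of a subgroup refine the orbits of the full group. Writing $G := aut(\G(A))$ and $H := aut(\G(A,B))$, I would first recall that an orbit is an equivalence class of the relation on $\mathcal{V}$ defined by $i \sim_G j$ if and only if $P(i) = j$ for some $P \in G$ (and analogously for $\sim_H$), and that the coarsest orbital partition is exactly the partition of $\mathcal{V}$ into these classes. Since the orbits of a group are uniquely determined by its action, there is no ambiguity in what is meant by ``the coarsest orbital partition'' for either group.

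The core of the argument is a one-line implication. Suppose $i$ and $j$ lie in the same orbit $C_k$ of $\Pi$, so there is a permutation matrix $P \in H$ with $P(i) = j$. By the preceding lemma, $H$ is a subgroup of $G$, hence $P \in G$ as well, and therefore $i$ and $j$ lie in the same orbit of $aut(\G(A))$. Fixing $i$ and letting $j$ range over all of $C_k$, this shows that every node of $C_k$ belongs to the single $G$-orbit containing $i$; that is, $C_k$ is contained in one orbit of the coarsest partition induced by $aut(\G(A))$. As $C_k$ was an arbitrary orbit of $\Pi$, the claim follows.

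There is no genuine obstacle here, since the statement is a direct consequence of the inclusion of relations $\sim_H \,\subseteq\, \sim_G$. The only point that requires care is to invoke the subgroup inclusion in the correct direction: it is the \emph{smaller} group $H$ whose orbits are the finer ones, because every permutation realizable within $H$ is also realizable within $G$, whereas the converse may fail. I would also make explicit that both groups act through the same natural permutation action on the node set $\mathcal{V}$, so that the relations $\sim_H$ and $\sim_G$ are directly comparable and their inclusion yields precisely the asserted refinement of partitions.
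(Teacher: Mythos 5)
Your proof is correct and takes essentially the same route as the paper: both arguments observe that any permutation in $aut(\G(A,B))$ witnessing that two nodes share an orbit also lies in $aut(\G(A))$, so each orbit of the finer partition sits inside a single orbit of the coarser one. Your version merely spells out the subgroup-orbit-refinement reasoning more formally than the paper's one-line proof.
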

\begin{proof}
The thesis follows from the observation that if two (or more) nodes are permuted by a permutation matrix $P$ in $aut(\G(A,B))$ and thus belong to the same orbit, then they also belong to the same orbit of the coarsest orbital partition induced by $aut(\G(A))$, as the same matrix $P$ also belongs to $aut(\G(A))$.
\end{proof}

\begin{theorem}\label{thm:thm_1}
If there exists a permutation matrix $P \neq I$ such that $PA-AP =0$ and $PB-B=0$, then
\begin{itemize}
    \item[(i)] the set of states $\mathcal{X}_{or}:= \lbrace x: x_i = x_l \; \forall \, i,l \in \mathcal{C}_j, \ \forall j \rbrace\subset \mathcal{X}$, is an invariant subspace of the matrix $A$, i.e., $\forall x\in \mathcal{X}_{or}$, $Ax\in \mathcal{X}_{or}$;
    \item[(ii)] if $x_i=x_l$ then $\dot x_i = \dot x_l$ for all $(i,l) \in \C_j$ and for all $j$.
\end{itemize}
\end{theorem}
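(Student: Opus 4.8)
The plan is to identify $\mathcal{X}_{or}$ with the common fixed-point subspace of the group $\mathcal{P}=aut(\G(A,B))$ and then read off both claims directly from the defining relations $PA=AP$ and $PB=B$. The crucial preliminary observation is the equivalence
\[
\mathcal{X}_{or}=\{x\in\mathcal{X}:\ Px=x \ \text{ for all } P\in\mathcal{P}\}.
\]
To establish this I would argue both inclusions from the orbit structure. If $x$ is constant on each cluster $\C_j$, then any $P\in\mathcal{P}$ merely permutes equal entries within each orbit and hence leaves $x$ unchanged, so $Px=x$. Conversely, if $Px=x$ for every $P$, then given any two nodes $i,l$ lying in the same orbit $\C_j$ there is, by the very definition of an orbit, a permutation $P\in\mathcal{P}$ carrying $i$ to $l$; comparing the corresponding entries of $Px$ and $x$ forces $x_i=x_l$, so $x\in\mathcal{X}_{or}$. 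This equivalence is the step that needs the most care, since it is exactly where the combinatorial definition of the orbits of $aut(\G(A,B))$ has to be matched to the linear-algebraic fixed-point condition; once past it, everything else is substitution.

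Granting this characterization, part (i) is immediate. For any $x\in\mathcal{X}_{or}$ and any $P\in\mathcal{P}$, the commutation relation yields $P(Ax)=(PA)x=(AP)x=A(Px)=Ax$, so $Ax$ is fixed by every element of $\mathcal{P}$ and therefore $Ax\in\mathcal{X}_{or}$. Hence $\mathcal{X}_{or}$ is an invariant subspace of $A$.

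For part (ii) I would first translate $PB=B$ into a statement about the rows of $B$: if $P$ realizes the permutation $\pi$, then $PB=B$ forces the $i$-th and $\pi(i)$-th rows of $B$ to coincide for every node $i$, and iterating over the elements of $\mathcal{P}$ shows that all rows of $B$ indexed by nodes of a common orbit $\C_j$ are equal. Consequently $(Bu)_i=(Bu)_l$ whenever $i,l\in\C_j$, for any input $u$. Combining this with part (i), which gives $(Ax)_i=(Ax)_l$ on each orbit whenever $x\in\mathcal{X}_{or}$, we obtain $\dot x_i=(Ax)_i+(Bu)_i=(Ax)_l+(Bu)_l=\dot x_l$ for all $(i,l)\in\C_j$ and all $j$, which is precisely the assertion that the full controlled vector field preserves the equal-entries condition on orbits. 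The only genuine difficulty lies in the fixed-point characterization above; with that in hand, both parts reduce to a one-line application of the two defining identities.
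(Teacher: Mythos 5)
Your proposal is correct, and every step holds up: the fixed-point characterization $\mathcal{X}_{or}=\{x:Px=x\ \forall P\in aut(\G(A,B))\}$ is exactly right (both inclusions follow from orbit transitivity, as you argue), the commutation computation $P(Ax)=A(Px)=Ax$ gives (i), and the row-constancy of $B$ on orbits extracted from $PB=B$ combines with (i) to give (ii).

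The route differs from the paper's in organization rather than in substance. The paper argues dynamically: it left-multiplies the state equation by $P$, uses $PA=AP$ and $PB=B$ to conclude that the permuted trajectory $y=Px$ obeys the same ODE as $x$, and then invokes orbit transitivity to deduce (ii) first and (i) as a consequence. You argue statically: you make the fixed-point characterization of $\mathcal{X}_{or}$ explicit, derive (i) purely from the commutation relation, and only then bring in $PB=B$ (as equality of rows of $B$ within each orbit) to get (ii). Your version is the more complete write-up --- the paper never states the fixed-point characterization or the row-constancy of $B$, and its passage from ``$x$ and $Px$ share the same dynamics'' to statement (ii) is left largely to the reader --- whereas the paper's permuted-trajectory viewpoint generalizes more readily to nonlinear node dynamics, where entrywise equality of $Ax$ would have to be replaced by equivariance of the vector field. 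Both rest on the same two facts: the defining identities $PA=AP$, $PB=B$, and the transitivity of the automorphism group on each cluster.
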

\begin{proof} Let us start by showing that if there exists a permutation matrix $P$ such that $PA=AP$ and $PB=B$, then the network state $x$ and the permuted state vector $y:=Px$ share the same dynamics. Indeed, by left multiplying both sides of eq. \eqref{eq:contr_net} by $P$ we get
$$P\dot x = PAx + PBu.$$
Then, as $PA=AP$ and $PB=B$, we get
$$\dot y = Ay + Bu.$$
Moreover, as there always exists a permutation matrix $P \in aut(\G(A,B))$ that maps  any two nodes belonging to the same clusters into each other \citep{KlPeSo:19}, then statement (ii) follows, i.e., nodes in the same clusters share the same dynamics, and thus if $x_i = x_l$ for all $i$ and $l$ in the same cluster, then also $\dot x_i =\dot x_l$. Moreover, this also ensures that the subspace made of all the points of the state-space such that $x_i = x_l$ $\forall$ $i,l\in \mathcal{C}_j$ and $\forall j = 1,\dots,K$ is $A$-invariant (statement (i)).
\end{proof}
%\begin{corollary}\label{cor:gramian}
%Let $W$ be the reachability gramian of the pair $(A,B)$. Then, $w_{ik} = w_{lk}$ for all $(i,l) \in \C_j$ and for all $k$. 
%\end{corollary}
%\begin{proof}
%Let us denote by $x^f_i(t)$ the forced dynamics of the generic node $i$. Then, let us recall that the reachability gramian $W$ of the pair $(A,B)$ can be computed as 
%\begin{equation}\label{eq:gram}
%\sum_{r=1}^m \int_0^t g^r(\tau){g^r}(\tau)^T d\tau,     
%\end{equation}
%where $g^i(\tau)$ is the i-th column of the impulse response of network \eqref{eq:contr_net}, and $m$ is the number of columns of the matrix $B$. From statement (ii) of theorem \ref{thm:thm_1}, we know that $x^f_i(t) = x^f_l(t)$ for all $i,l \in \C_j$ and for all $j$, and thus also that $g^r_i(\tau)=g^r_l(\tau)$ for all $i,l \in \C_j$ and for all $j$, which in turn, considering eq. \eqref{eq:gram}, proves our statement.
%\end{proof}
Theorem \ref{thm:thm_1} establishes the existence of the group consensus subspace $\mathcal{X}_{or}$ for network \eqref{eq:contr_net}. Hence, to tackle consensus control problems, it is useful to introduce a transformation that allows us to separate the dynamics along $\mathcal{X}_{or}$ from that orthogonal to $\mathcal{X}_{or}$ itself. This task is accomplished by the Irreducible Representation (IRR) of the symmetry group through a transformation in a new coordinate system \citep{peso14} $z_{or}=T_{or}x$. The transformation matrix
\begin{equation}\label{eq:T_or}
T_{or}= \left[ \begin{array}{c}
T^{\parallel}\\ T^{\perp}
\end{array}\right]
\in \mathbb{R}^{N\times N}
\end{equation}
 is orthogonal, and the elements of the block $T^{\parallel}\in \mathbb{R}^{K\times N}$ are such that 
\begin{equation}\label{eq:T_or2}
T^{\parallel}_{ij}=\sqrt{\vert \C_i\vert }^{-1}
\end{equation}
if node $j$ is in cluster $i$ and 0 otherwise. Note that the $K$ rows of the matrix $T^{\parallel}$ are a basis of $\mathcal{X}_{or}$ while the rows of the matrix $T^{\perp}\in\mathbb{R}^{(N-K)\times N}$ are a basis of the orthogonal complement to the group consensus subspace. Notably, each of the rows of the matrix $T^{\perp}$, say the $j$-th, can be associated to a single cluster say $\mathcal{C}_i$. Namely, each element $T^{\perp}_{jl}$ is nonzero only if node $l$ belongs to the cluster $\mathcal{C}_i$. Consistently, the dynamic matrix $\tilde A = T_{or}AT_{or}^{T}$ has the following structure:
\begin{equation}\label{eq:transformation}
\tilde{A}=T_{or}AT_{or}^T=\left[
\begin{array}{cc}
A_{\parallel} & 0 \\ 
0 & A_{\perp}  \\ 
\end{array}\right].
\end{equation}
From eq. \eqref{eq:transformation}, we see that the IRR decouples the dynamics along the consensus subspace governed by the block $A_{\parallel}$ from that orthogonal to the group consensus subspace governed by the block $A_{\perp}$. In this new coordinate system, the dynamics of network \eqref{eq:contr_net} can be rewritten as
\begin{equation}\label{eq:trasformed_net}
\dot{z}_{or}= \tilde{A}z_{or}+\tilde{B}u,
\end{equation}
and 
\begin{equation}\label{eq:B_or}
\tilde{B}=T_{or}B=\left[
\begin{array}{c}
B_{\parallel}  \\ 
B_{\perp} 
\end{array}\right].
\end{equation}

Indeed, the pair $(A_{\parallel},B_{\parallel})$, which we  will denote as the \emph{quotient pair}, determines the controllability properties of the dynamics along the subspace $\mathcal{X}_{or}$ and thus our ability to control the consensus state, while the pair $(A_{\perp},B_{\perp})$ determines our ability to stabilize $\mathcal{X}_{or}$. We are interested in studying the controllability properties of the two pairs $(A_{\parallel},B_{\parallel})$ and $(A_{\perp},B_{\perp})$. Before doing so, we will present a few more details on this representation. First of all, let us point out that the block $T_{\parallel}$ of the matrix $T_{or}$ is such that $T_{\parallel} = {E^{\Pi}}^{\dagger}$, where $E^{\Pi}\in \mathbb{R}^{N\times K}$ is the indicator matrix corresponding to the coarsest orbital partition $\Pi$. Consistently, the state of the quotient network, the network associated to pair $(A_{\parallel},B_{\parallel})$, can be computed as $$z^{\parallel}= {E^{\Pi}}^{\dagger}x\in \mathbb{R}^{K}$$
 and thus we have that $A_{\parallel}= {E^{\Pi}}^{\dagger} A {E^{\Pi}}$ and $B_{\parallel}= {E^{\Pi}}^{\dagger} B$.

Now, we are ready to give the following theorem.
\begin{theorem}\label{thm:contr_sub}
If there exists a matrix $P\neq I$ such that $PA = AP$ and $PB=B$, then $\mathcal{X}_{or}$, the invariant subspace of the matrix $A$ associated to the cluster consensus solution, encompasses the controllable subspace.
\end{theorem}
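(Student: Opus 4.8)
The plan is to characterise the controllable subspace as the smallest $A$-invariant subspace containing $\mathrm{Im}(B)$, and then to exhibit $\mathcal{X}_{or}$ as one such $A$-invariant subspace that already contains $\mathrm{Im}(B)$; minimality then forces the controllable subspace to sit inside $\mathcal{X}_{or}$. Concretely, I would start by recalling that the controllable subspace of the pair $(A,B)$ is $\mathcal{R}=\mathrm{Im}\,[\,B\ AB\ \cdots\ A^{N-1}B\,]$, which equals $\mathrm{span}\{A^k b : k\ge 0,\ b\text{ a column of }B\}$ and is precisely the smallest $A$-invariant subspace containing $\mathrm{Im}(B)$.

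The first key step is the inclusion $\mathrm{Im}(B)\subseteq \mathcal{X}_{or}$. To prove it I would first record the equivalent description of $\mathcal{X}_{or}$ as the common fixed-point subspace of the group, $\mathcal{X}_{or}=\{x : Px=x \ \forall P\in aut(\G(A,B))\}$: a vector is constant on each orbit exactly when it is left unchanged by every permutation that only shuffles nodes within their orbits, and conversely, since for any two nodes in the same cluster there is a group element mapping one onto the other, invariance under the whole group forces equality of the corresponding entries. With this description available, the defining relation $PB=B$, which holds for \emph{every} $P\in aut(\G(A,B))$ by construction of the group, says exactly that each column $b$ of $B$ satisfies $Pb=b$ for all such $P$, i.e.\ each column lies in $\mathcal{X}_{or}$. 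Hence $\mathrm{Im}(B)\subseteq \mathcal{X}_{or}$.

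The second step invokes Theorem \ref{thm:thm_1}(i), which guarantees that $\mathcal{X}_{or}$ is $A$-invariant. Combining the two facts gives $A\,\mathrm{Im}(B)\subseteq A\,\mathcal{X}_{or}\subseteq \mathcal{X}_{or}$, and an immediate induction yields $A^k\,\mathrm{Im}(B)\subseteq \mathcal{X}_{or}$ for every $k\ge 0$. Therefore every generator $A^k b$ of $\mathcal{R}$ belongs to $\mathcal{X}_{or}$, and since $\mathcal{X}_{or}$ is a subspace we conclude $\mathcal{R}\subseteq \mathcal{X}_{or}$, which is the assertion. Equivalently, in the IRR coordinates of \eqref{eq:transformation}--\eqref{eq:B_or} the inclusion $\mathrm{Im}(B)\subseteq \mathcal{X}_{or}$ reads $B_{\perp}=T^{\perp}B=0$, so the block-diagonal structure of $\tilde A$ renders the $\perp$-component identically uncontrollable.

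I expect the only genuinely delicate point to be the first step, establishing $\mathrm{Im}(B)\subseteq \mathcal{X}_{or}$ cleanly, because it requires care that $\mathcal{X}_{or}$, defined through the clusters (orbits) of the full automorphism group $aut(\G(A,B))$, coincides with the common fixed-point subspace of that group, and that the relation $PB=B$ is available for every group element rather than merely for the single $P\neq I$ supplied by the hypothesis. Everything after that is the standard ``smallest invariant subspace'' argument and is routine.
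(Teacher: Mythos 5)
Your proof is correct and follows essentially the same route as the paper's: both establish $\mathrm{Im}(B)\subseteq\mathcal{X}_{or}$ from the fact that $PB=B$ for all group elements forces each column of $B$ to be constant on clusters, and then invoke the characterization of the controllable subspace as the smallest $A$-invariant subspace containing $\mathrm{Im}(B)$ together with the $A$-invariance of $\mathcal{X}_{or}$ from Theorem \ref{thm:thm_1}. Your version merely spells out more explicitly (and usefully) that the argument needs $PB=B$ for every element of $aut(\G(A,B))$, not just the single $P$ in the hypothesis.
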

\begin{proof}
To prove the statement we must show that if $PB=B$, $\mathcal{X}_{or}$ encompasses the range of $B$. Indeed, if $PB=B$, then $B$ is such that $b_{il}=b_{jl}$ for all $l$ and for all $i,j$ in the same cluster,  due to the fact that left-multiplying a vector by the matrix $P$ only permutes the elements associated to nodes of the same cluster. Hence, all the columns of $B$ and thus its range, are encompassed in $\mathcal{X}_{or}$. As the controllable subspace is defined as the smallest $A$-invariant subspace encompassing the range of $B$, the thesis follows.
\end{proof}
\begin{corollary}\label{cor:B_perp}
$B_{\perp} = \mathbf{0}_{(N-K)\times M}$.
\end{corollary}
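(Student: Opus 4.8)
The plan is to read off $B_\perp$ from the block definition \eqref{eq:B_or} and then combine Theorem \ref{thm:contr_sub} with the geometric meaning of the rows of $T^\perp$. Recall that $\tilde B = T_{or} B$ and that, by the block partition of $T_{or}$ in \eqref{eq:T_or}, this gives $B_\perp = T^\perp B$. Hence it suffices to show that $T^\perp B$ vanishes.

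First I would invoke Theorem \ref{thm:contr_sub}: under the standing hypothesis $PB = B$, the range of $B$ is contained in the group consensus subspace $\mathcal{X}_{or}$, i.e.\ every column of $B$ is an element of $\mathcal{X}_{or}$. Second, I would recall that, by construction, the rows of $T^\perp$ form a basis of the orthogonal complement $\mathcal{X}_{or}^\perp$.

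Combining the two facts, I would argue entrywise: the $(j,l)$ entry of $B_\perp = T^\perp B$ is the inner product between the $j$-th row of $T^\perp$, which lies in $\mathcal{X}_{or}^\perp$, and the $l$-th column of $B$, which lies in $\mathcal{X}_{or}$. Since any vector in $\mathcal{X}_{or}^\perp$ is orthogonal to any vector in $\mathcal{X}_{or}$, each such inner product is zero, so $B_\perp = \mathbf{0}_{(N-K)\times M}$.

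There is no genuine obstacle here: the corollary is a direct geometric consequence of the containment $\mathrm{range}(B)\subseteq \mathcal{X}_{or}$ established in Theorem \ref{thm:contr_sub}, together with the identification of the rows of $T^\perp$ as a basis of $\mathcal{X}_{or}^\perp$. The only point deserving care is making explicit \emph{why} multiplying $B$ on the left by $T^\perp$ annihilates it columnwise; this is precisely the entrywise orthogonality argument above, which uses the orthogonality of $T_{or}$ only through the fact that $T^\perp$ spans the orthogonal complement of $\mathcal{X}_{or}$.
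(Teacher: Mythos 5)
Your argument is correct and is exactly the paper's intended proof, merely spelled out: the paper states that the corollary follows from Theorem \ref{thm:contr_sub} (range of $B$ contained in $\mathcal{X}_{or}$) together with the definition $B_\perp = T^\perp B$, and your entrywise orthogonality computation is the explicit form of that deduction. No gaps; nothing further needed.
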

\begin{proof}
The statement is a direct consequence of the statement of Theorem \ref{thm:contr_sub} and of the definition of $B_{\perp}$.
\end{proof}
%in the framework of this paper, it is useful to observe that the IRR transformation is also a \emph{reachability transformation} and that $A_r\in \mathbb{R}^{K\times K}$ and $A_{ur}\in\mathbb{R}^{(N-K)\times (N-K)}$. 
%Moreover, as $\tilde{A}$ and $A_r$ are similar the reachable part of the network coincides with the cluster consensus subspace. Trivially, as $\bar{A}$ is block diagonal, the unreachable part is not affected by the input neither by the the reachable part of the network.  

\section{Controlling group consensus}\label{sec:gc_contr}
In Section \ref{sec:contr_sym} we have established some controllability limitations of networks with symmetries. Here, we show how to operate within these limitations in order to control group consensus.
\begin{corollary}\label{cor:optimal_control}
Consider a graph $\G (A,B)$ with coarsest orbital partition $\Pi$. If the pair $(A_{\parallel},B_{\parallel}$) is controllable, then for any cost function $J(u(t))$ the optimal control problem
\begin{subequations}\label{eq:orig_pr}
\begin{equation}\label{eq:orig_pr_a}
\min_u \ \int_0^{t_f} J(u(t))dt\\[-5mm]
\end{equation}
\begin{align}
    s.t. \quad
& \label{eq:orig_pr_b}
\dot x = Ax+Bu\\ 
& \label{eq:orig_pr_c}
x(0) = x_0\\
&\label{eq:orig_pr_d}
x(t_f) = x_f%\\
\end{align}
\end{subequations}
admits solution $u^*(t):=\textit{argmin} \int_0^{t_f} J(u(t))dt $ if and only if $x_0$ and $x_f$ are such that $T_{or}x_0 = [z^{\parallel}_0 \ 0]^T$ and $T_{or}x_f = [z^{\parallel}_f \ 0]^T$, i.e., $z^{\perp}_0=z^{\perp}_f = 0$. Moreover, if $T_{or}x_f = [z^{\parallel}_f \ 0]$, then $u^*= u^{**}$, where $u^{**}$ is the solution of the following optimal control problem
\begin{subequations}\label{eq:quo_pro}
\begin{equation}\label{eq:quo_pro_a}
\min_{u} \int_0^{t_f} J(u(t))dt\\[-5mm]
\end{equation}
\begin{align}
    s.t. \quad
& \label{eq:quo_pro_b}
\dot{z}^{\parallel} = A_{\parallel}z^{\parallel}+B_{\parallel}u\\
&\label{eq:quo_pro_c}
z^{\parallel}(0) = T^{\parallel}x_0\\
&\label{eq:quo_pro_d}
z^{\parallel}(t_f) = T_{\parallel} x_f. \end{align}
\end{subequations}
\end{corollary}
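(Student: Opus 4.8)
The plan is to pass to the IRR coordinates $z_{or}=T_{or}x$, in which, by \eqref{eq:transformation} and Corollary~\ref{cor:B_perp}, the closed-loop dynamics decouple into a controllable block $\dot z^{\parallel}=A_{\parallel}z^{\parallel}+B_{\parallel}u$ and an \emph{autonomous} block $\dot z^{\perp}=A_{\perp}z^{\perp}$, the latter unforced precisely because $B_{\perp}=0$. The first fact I would record is that the orthogonal component is then completely determined by its initial value, $z^{\perp}(t)=e^{A_{\perp}t}z^{\perp}_0$, with no dependence on $u$; since $e^{A_{\perp}t_f}$ is invertible, the endpoint requirement $z^{\perp}(t_f)=z^{\perp}_f$ can be met if and only if $z^{\perp}_f=e^{A_{\perp}t_f}z^{\perp}_0$, and in particular $z^{\perp}_f=0\iff z^{\perp}_0=0$. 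A useful consequence is that starting from a consensus configuration ($z^{\perp}_0=0$) forces $z^{\perp}(t)\equiv 0$, so the whole trajectory stays in $\mathcal{X}_{or}$.

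For the feasibility claim I would argue the two implications separately. For sufficiency, assume $z^{\perp}_0=z^{\perp}_f=0$: then $z^{\perp}(t)\equiv0$ meets both orthogonal endpoint conditions for every $u$, while controllability of the quotient pair $(A_{\parallel},B_{\parallel})$ guarantees --- through positive definiteness of its reachability Gramian for any $t_f>0$ --- a control steering $z^{\parallel}_0=T^{\parallel}x_0$ to $z^{\parallel}_f=T^{\parallel}x_f$; hence the feasible set of \eqref{eq:orig_pr} is nonempty and a minimizer exists. For necessity, any feasible control forces $z^{\perp}_f=e^{A_{\perp}t_f}z^{\perp}_0$ by the autonomy above; read in the group-consensus setting, where the target is a consensus state ($z^{\perp}_f=0$), invertibility of $e^{A_{\perp}t_f}$ then gives $z^{\perp}_0=0$, i.e.\ $z^{\perp}_0=z^{\perp}_f=0$.

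The equivalence $u^*=u^{**}$ is where the substance lies, and the decisive observation is that the cost $\int_0^{t_f}J(u(t))\,dt$ is a functional of $u$ \emph{alone}. I would show the two problems are literally the same optimization over $u$. When $z^{\perp}_0=0$ we have $z^{\perp}(t)\equiv0$ for every admissible $u$, so the constraint $z^{\perp}(t_f)=0$ is vacuous and places no restriction on $u$; a control is therefore feasible for \eqref{eq:orig_pr} exactly when it drives $z^{\parallel}_0$ to $z^{\parallel}_f$ under $\dot z^{\parallel}=A_{\parallel}z^{\parallel}+B_{\parallel}u$, which is precisely the feasibility requirement of the quotient problem \eqref{eq:quo_pro}, with matching data $z^{\parallel}(0)=T^{\parallel}x_0$ and $z^{\parallel}(t_f)=T^{\parallel}x_f=T_{\parallel}x_f$. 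Having identical feasible control sets and identical objectives, the two programs share the same minimizer, so $u^*=u^{**}$.

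I expect the ``if and only if'' to be the main obstacle rather than the equivalence of controls. The delicate points are that the orthogonal dynamics is genuinely unforced --- which is exactly Corollary~\ref{cor:B_perp} --- and that it is invertibility of the matrix exponential that upgrades the general endpoint-compatibility condition $z^{\perp}_f=e^{A_{\perp}t_f}z^{\perp}_0$ into the clean statement $z^{\perp}_0=z^{\perp}_f=0$ once the target is taken in $\mathcal{X}_{or}$. A secondary technical matter is the existence (beyond mere feasibility) of the minimizer for an arbitrary $J$; I would inherit it directly from the quotient problem, since the two feasible control sets coincide, rather than imposing separate coercivity or lower-semicontinuity hypotheses.
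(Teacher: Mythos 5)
Your proof is correct and follows essentially the same route as the paper's: decouple via the IRR transformation $T_{or}$, use Corollary~\ref{cor:B_perp} ($B_{\perp}=0$) to make the orthogonal block autonomous, and then identify the feasible control sets and objectives of problems \eqref{eq:orig_pr} and \eqref{eq:quo_pro}. If anything, you are slightly more careful than the paper on the ``only if'' direction, where the general endpoint-compatibility condition is $z^{\perp}_f=e^{A_{\perp}t_f}z^{\perp}_0$ and the clean conclusion $z^{\perp}_0=z^{\perp}_f=0$ only follows once the target is taken in $\mathcal{X}_{or}$ --- a restriction the paper's proof leaves implicit.
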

\begin{proof}
From Theorem \ref{thm:contr_sub}, if $x_f$ is such that $z^{\perp}_f \neq 0$ then $x_f$ is not reachable, while if $x_0$ is such that $z^{\perp}_0\neq 0$ then $x_f$ is not reachable from $x_0$. Hence, in both cases problem \eqref{eq:orig_pr} is not feasible. On the other hand, if $x_0$ and $x_f$ are such that $z^{\perp}_0=z^{\perp}_f = 0$, then both $x_0$ and $x_f$ belong to $\mathcal{X}_{or}$, which we know coincides with the controllable subspace from Theorem \ref{thm:contr_sub} and from the hypotheses. Then, reaching $z^{\parallel}_f$ is equivalent to reaching the point $x_f$. Hence, to prove our thesis, we are left with showing that $u^* = u^{**}$. We will do so by showing that problems
\eqref{eq:orig_pr} and \eqref{eq:quo_pro} share the same decision variables, cost function, and constraints. Indeed, the decision variables are the same by definition, as well as the cost function as input signals are not affected by equivalent transformations. Finally, to prove that problems \eqref{eq:orig_pr} and \eqref{eq:quo_pro} share the same constraints, let us show that by left multiplying both sides of equations \eqref{eq:orig_pr_b}-\eqref{eq:orig_pr_d}, we obtain eqs. \eqref{eq:quo_pro_b}-\eqref{eq:quo_pro_d} together with a set of equations that are always verified independently of $u$. Indeed from the hypotheses this is true for eqs. \eqref{eq:orig_pr_c} \eqref{eq:orig_pr_d}, as left multiplying both by the matrix $T_{or}$ we obtain eqs. \eqref{eq:quo_pro_c} and \eqref{eq:quo_pro_d} together with two sets of $N-K$ equations of the type $0=0$. Finally, from eqs. \eqref{eq:transformation}, \eqref{eq:B_or}, and Corollary \ref{cor:B_perp} we know that left-multiplying eq. \eqref{eq:orig_pr_b} by $T_{or}$ yields
\begin{subequations}
\begin{eqnarray}\label{eq:dyn_a}
\dot z^{\parallel} &=& A_{\parallel}z^{\parallel}+ B_{\parallel}u
\\
\label{eq:dyn_b}
\dot z^{\perp} &=& A_{\perp}z^{\perp}.    
\end{eqnarray}
\end{subequations}
As $z^{\perp}(0)=0$, from eq. \eqref{eq:dyn_b} we have that $z^{\perp}(t)=0$ for all $t$, and thus eq. \eqref{eq:dyn_a}, which coincides with eq. \eqref{eq:quo_pro_b}, captures completely the dynamics in eq. \eqref{eq:orig_pr_b} independently of $u$. Hence, problem \eqref{eq:orig_pr} and the reduced order problem in \eqref{eq:quo_pro} share the same decision variables, cost function, and constraints which implies that $u^*=u^{**}$.
\end{proof}
\begin{remark}
Note that Corollary \ref{cor:optimal_control} provides an approach to design an input to control group consensus. A viable alternative is to solve
\begin{subequations}\label{eq:out_pro}
\begin{equation}\label{eq:out_pro_a}
\min_{u} \int_0^{t_f} J(u(t))dt\\[-5mm]
\end{equation}
\begin{align}
    s.t. \quad
& \label{eq:out_pro_b}
\dot {x}=Ax+Bu
\\
& \label{eq:out_pro_b_bis}
y = {E^{\Pi}}^T x
\\
& \label{eq:out_pro_c}
x(0) = x_0 \in \mathcal{X}_{or}
\\
&\label{eq:out_pro_d}
y(t_f) = y_{f}.    
\end{align}
\end{subequations}
with $E^{\Pi}$ being the indicator matrix corresponding to the partition $\Pi$ of the network nodes, and 
$$\frac{y_i}{|\mathcal{C}_i|}$$
being the consensus value for all the nodes of the cluster $\mathcal{C}_i$.
\end{remark}
\begin{remark}\label{rem:stab}
Corollary \ref{cor:optimal_control} provides an approach to control the consensus solution. The stability properties of the group consensus subspace are determined by the eigenvalues of the block $A_{\perp}$ of the matrix $\tilde A$ in eq. \eqref{eq:transformation}. However this solution is not stabilizable, as the dynamics orthogonal to the group consensus subspace are uncontrollable (see Theorem \ref{thm:contr_sub}).  
\end{remark}

Motivated by the considerations in Remark \ref{rem:stab}, we now tackle the problem of selecting a set of nodes in which additional inputs must be injected to stabilizable $\mathcal{X}_{or}$. To do so, we leverage the following conditions from \citep{PBH:70}.
\begin{defi}\label{def:PBH}
Given a pair $(A,B)$ an eigenvalue $\lambda_i$ of $A$ is controllable if and only if $\exists j$ such that $v_i^{T}b_j\neq 0$, for any eigenvector $v_i$ associated to $\lambda_i$.
\end{defi}
\begin{theorem}\label{thm:PBH} \citep{PBH:70}
A dynamical system defined by the pair $(A,B)$ is stabilizable if and only if every unstable eigenvalue of $A$ is controllable.
\end{theorem}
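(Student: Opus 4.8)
The plan is to reduce the stabilizability of $(A,B)$ to a statement about its \emph{uncontrollable} modes by means of the Kalman controllability decomposition, and then to identify those modes with the eigenvalues that fail Definition~\ref{def:PBH}. Recall that $(A,B)$ is stabilizable precisely when there is a gain $K$ with $A+BK$ Hurwitz (all eigenvalues in the open left half-plane), and that $\lambda_i$ is controllable in the sense of Definition~\ref{def:PBH} exactly when $v_i^{T}B\neq 0$ for every eigenvector $v_i$ of $\lambda_i$. The first thing I would record is the equivalent rank form of this condition: $\lambda_i$ is controllable iff $[\lambda_i I - A \ \vert \ B]$ has full row rank $N$, because a rank drop is the same as the existence of a nonzero $w$ with $w^{T}(\lambda_i I-A)=0$ and $w^{T}B=0$, i.e.\ a left eigenvector of $A$ for $\lambda_i$ that annihilates $B$ (for the symmetric matrices relevant here, $w$ coincides with the eigenvector $v_i$ of the definition).

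I would first dispose of the necessity direction ($\Rightarrow$), which needs no decomposition. Assume $(A,B)$ is stabilizable, so $A+BK$ is Hurwitz for some $K$, and suppose toward a contradiction that some unstable eigenvalue $\lambda_i$ (with $\mathrm{Re}(\lambda_i)\ge 0$) is uncontrollable. Then there is a nonzero left eigenvector $w$ with $w^{T}A=\lambda_i w^{T}$ and $w^{T}B=0$, whence
\[
w^{T}(A+BK)=w^{T}A + (w^{T}B)K = \lambda_i\, w^{T}.
\]
Thus $\lambda_i$ is an eigenvalue of $A+BK$ no matter how $K$ is chosen, contradicting that $A+BK$ is Hurwitz. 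Hence every unstable eigenvalue must be controllable.

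For the sufficiency direction ($\Leftarrow$) I would invoke the Kalman controllability decomposition: a change of coordinates brings $(A,B)$ into the block form
\[
\tilde A=\begin{bmatrix} A_c & A_{12}\\ 0 & A_{\bar{c}}\end{bmatrix},\qquad \tilde B=\begin{bmatrix} B_c\\ 0\end{bmatrix},
\]
with $(A_c,B_c)$ controllable and the spectrum of $A_{\bar{c}}$ equal to the set of uncontrollable eigenvalues of $A$. Under the hypothesis that every unstable eigenvalue is controllable, all eigenvalues of $A_{\bar{c}}$ lie in the open left half-plane, so $A_{\bar{c}}$ is already Hurwitz. Since $(A_c,B_c)$ is controllable, the eigenvalue-assignment (pole-placement) theorem lets me choose $K_1$ making $A_c+B_cK_1$ Hurwitz; taking the transformed gain $[\,K_1\ \ 0\,]$ keeps the closed loop block upper triangular with bottom-right block $A_{\bar{c}}$ unchanged, so its two diagonal blocks are both Hurwitz and the whole closed loop is Hurwitz. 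Transforming back produces a stabilizing gain for $(A,B)$.

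The main obstacle is the sufficiency direction, and within it the single nontrivial ingredient is the pole-placement theorem for controllable pairs, which supplies the gain $K_1$ rendering $A_c+B_cK_1$ Hurwitz; everything else is bookkeeping with the triangular block structure. A secondary point requiring care is the exact correspondence between $\sigma(A_{\bar{c}})$ and the eigenvalues violating Definition~\ref{def:PBH}, namely that a left eigenvector annihilating $B$ exists for $\lambda_i$ if and only if $\lambda_i\in\sigma(A_{\bar{c}})$; this I would obtain from the rank characterization above together with the defining property of the controllability decomposition.
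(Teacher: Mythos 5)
The paper does not prove this statement at all: it is quoted as a classical result of Popov--Belevitch--Hautus and supported only by the citation to \citep{PBH:70}, so there is no in-paper argument to compare yours against. Your proof is the standard and correct one: necessity via a left eigenvector $w$ of an uncontrollable unstable mode, which survives any feedback because $w^{T}(A+BK)=\lambda_i w^{T}$; sufficiency via the Kalman controllability decomposition plus pole placement on the controllable block, with the uncontrollable block already Hurwitz by hypothesis. You are also right to flag the only point where the paper's phrasing could trip you up: Definition~\ref{def:PBH} states the test with (right) eigenvectors $v_i$, whereas the general PBH condition is a left-eigenvector condition; your observation that these coincide here because $A$ is symmetric is exactly the reconciliation needed, and the identification of $\sigma(A_{\bar c})$ with the eigenvalues failing that test is the standard defining property of the decomposition. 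No gaps.
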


We denote by $w$ the $W$-dimensional vector of the additional inputs  and by $D$ the $N\times W$ dimensional matrix indicating the nodes in which these inputs are injected, that is, the drivers. Namely, $D_{ij}\neq 0$ if the $j$-th additional input $w_j$ is injected in the $i$-th network node and $0$ otherwise. Considering these additional inputs leads to rewriting the dynamics of the network in eq. \eqref{eq:contr_net} as
\begin{equation}\label{eq:contr_net_b}
\dot {x} = Ax+Bu+Dw.
\end{equation}
As a result, applying the transformation $T_{or}$ in eq. \eqref{eq:transformation} to the controlled network in eq. \eqref{eq:contr_net_b} yields
\begin{equation}\label{eq:transformed_net_b}
\dot{z} = \tilde A z +\tilde B u + \tilde D w,   
\end{equation}
where
\begin{equation}\label{eq:tilde_D}
    \tilde D = T_{or}D = \left[ \begin{array}{c}
         D_{\parallel}  \\
         D_{\perp}
    \end{array}\right].
\end{equation}
We constrain the selection of the matrix $D$ to be such that the input signals $w$ do not affect the dynamics along the group consensus subspace, so to allow independent design of (i) the control action $u$ responsible for controlling the group consensus solution and (ii) the stabilizing action $w$.

To be able to formulate and solve our driver node selection problem, let us relabel the eigenvalues of $A$ so that the first $K$ are also eigenvalues of $A_\parallel$ and the last $(N-K)$ are also eigenvalues of $A_\perp$  (here we just list all the eigenvalues of $A$ regardless of their multiplicity). Note that this is possible from the block diagonal structure of $\tilde A$ in eq. \eqref{eq:transformation}.
 After this relabeling, the eigenvectors of $A$ associated with its first $K$ eigenvalues span the group consensus subspace, while the eigenvectors of $A$ associated with the last $(N-K)$ eigenvalues span its orthogonal complement. In particular, the last $(N-K)$ eigenvalues of $A$ determine the stability properties of the group consensus subspace. Moreover, we denote by $\Omega_i$ the subspace of the eigenspace of the eigenvalue $\lambda_i$ of $A$ that is orthogonal to $\mathcal{X}_{or}$ and by $\mu_i$ the dimension of $\Omega_i$. Given a vector $d$, we denote by $proj_{\Omega_i}(d)$ its projection on $\Omega_i$. Finally, we denote by $\Lambda^{\perp}$ the subset of the eigenvalues of $A$ with nonnegative real part that are also eigenvalues of $A_{\perp}$. Thanks to these preliminary considerations and notation, we can now formulate our driver node selection problem
\newline \underline{\textbf{Problem 1:}} Select a matrix $D$ such that
\begin{subequations}
\begin{gather}\label{eq:D_par_0}
D_{\parallel} =0 \\
\label{eq:A_perp_stabilizable}
(A_{\perp},D_{\perp}) \ \mathrm{is \ stabilizable}
\end{gather}
\end{subequations}

\begin{algorithm}[h!]\caption{ Driver Node Selection Algorithm
	 \label{algo:main_algo}}
	\begin{algorithmic}
		\Procedure{Initialization }{$i=1$, $D$ is the empty matrix, $j=0$}
		\While{$i\leq |\Lambda^{\perp}|$}	
		    \State $\Delta_i = \lbrace D_j: proj_{\Omega_i}(D_i) \neq 0 \ \land \ \nexists    D_k : proj_{\Omega_i}(D_k) \parallel proj_{\Omega_i}(D_j)\rbrace $
			\State $h_i =|\Delta_i|$
			\While{$j\leq \mu_i-h_i$}
			\State $j=j+1$
			\State Build an $N$-dimensional vector $D_j$ by solving
			\vspace*{-4mm}
			\begin{align}\label{eq:nonzero_proj} 
			    &proj_{\Omega_i}(D_j) \neq 0\\ \label{eq:nonparalle_proj}
			    &proj_{\Omega_i}(D_j) \neq proj_{\Omega_i}(D_m) \  \forall m< j\\\label{eq:parallel_condition}
			    &\sum_{k\in \mathcal{C}_l} D_{j}(k) = 0 \ \forall l
			\end{align}\vspace*{-6mm}
			\State $D = [D \ D_j]$
			\EndWhile
		\State $i = i+1$
		\EndWhile
		%\State \textbf{return} {$J_l(k|k)$}
		\EndProcedure
	\end{algorithmic}
\end{algorithm}

Algorithm 1 prescribes to initialize the matrix $D$ as an empty matrix. Then, for all the eigenvalues in the set $\Lambda^{\perp}$, we find the number $h_i$ of columns of the matrix $D$ with nonzero and linearly independent projection on $\Omega_i$, that is, the subspace of the eigenspace associated to $\lambda_i$ that is orthogonal to the group consensus subspace. Then, we add $\mu_i-h_i$ column vectors to the matrix $D$ each having non-zero and linearly independent projection on $\Omega_i$, thus ensuring, from Definition \ref{def:PBH} that $\lambda_i$ is controllable. Thanks to the condition in eq. \eqref{eq:parallel_condition}, these $\mu_i-h_i$ added columns will be orthogonal to the group consensus subspace thus ensuring $D_{\parallel}=0$. Doing so for all $\lambda_i$ in $\Lambda^{\perp}$ ensures the pair $(A_{\perp},D_{\perp})$ is stabilizable thanks to Theorem \ref{thm:PBH}.
\begin{theorem}\label{thm:alg_ok}
Algorithm 1 solves Problem 1.
\end{theorem}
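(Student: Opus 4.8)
The plan is to discharge the two requirements of Problem 1 separately — the algebraic constraint $D_{\parallel}=0$ and the stabilizability of $(A_{\perp},D_{\perp})$ — and along the way to confirm that Algorithm 1 always terminates with a feasible matrix. I would first dispose of $D_{\parallel}=0$, which is immediate. Every column $D_j$ appended by the algorithm satisfies eq.~\eqref{eq:parallel_condition}, i.e. $\sum_{k\in\mathcal{C}_l}D_j(k)=0$ for each cluster $l$. Since the $i$-th row of $T^{\parallel}$ carries entries $\sqrt{|\mathcal{C}_i|}^{\,-1}$ on the nodes of $\mathcal{C}_i$ and zeros elsewhere, its product with $D_j$ equals $\sqrt{|\mathcal{C}_i|}^{\,-1}\sum_{k\in\mathcal{C}_i}D_j(k)=0$. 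Hence $T^{\parallel}D_j=0$ for every column, so $D_{\parallel}=T^{\parallel}D=0$; equivalently, eq.~\eqref{eq:parallel_condition} is exactly the statement that each column of $D$ lies in the orthogonal complement $\mathcal{X}_{or}^{\perp}$ of the group consensus subspace.

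The substance of the proof, and the main obstacle, is to establish stabilizability of $(A_{\perp},D_{\perp})$ via the PBH criterion of Theorem~\ref{thm:PBH}. Because $\Lambda^{\perp}$ collects precisely the eigenvalues of $A_{\perp}$ with nonnegative real part, it suffices to show that each $\lambda_i\in\Lambda^{\perp}$ is controllable for the transformed pair in the sense of Definition~\ref{def:PBH}. The difficulty is to translate the projection conditions of the algorithm, stated in the original coordinates on $\Omega_i\subseteq\mathcal{X}_{or}^{\perp}$, into the PBH condition on eigenvectors of $A_{\perp}$. The key observation I would prove is that $T^{\perp}$ restricts to an inner-product-preserving bijection between $\Omega_i$ and the $\lambda_i$-eigenspace of $A_{\perp}$: using $\tilde A=T_{or}AT_{or}^{T}$ and the block structure of eq.~\eqref{eq:transformation}, if $v\in\Omega_i$ is an eigenvector of $A$ then $\hat v:=T^{\perp}v$ satisfies $A_{\perp}\hat v=\lambda_i\hat v$; and since orthogonality of $T_{or}$ gives $(T^{\parallel})^{T}T^{\parallel}+(T^{\perp})^{T}T^{\perp}=I_N$, for any $v\in\Omega_i$ and any column $d$ of $D$ — both in $\mathcal{X}_{or}^{\perp}$, so that their $T^{\parallel}$-components vanish — one gets $\hat v^{T}(T^{\perp}d)=v^{T}d=v^{T}\,proj_{\Omega_i}(d)$. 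Consequently, $\lambda_i$ is controllable for $(A_{\perp},D_{\perp})$ if and only if the projections $\{proj_{\Omega_i}(d):d\ \text{a column of}\ D\}$ span $\Omega_i$.

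Granting this equivalence, the rest is a dimension count on the algorithm. When the outer loop reaches $\lambda_i$, the quantity $h_i=|\Delta_i|$ records how many already-built columns have linearly independent nonzero projections on $\Omega_i$; the inner loop then appends $\mu_i-h_i$ further columns whose projections, by eqs.~\eqref{eq:nonzero_proj}--\eqref{eq:nonparalle_proj}, are nonzero and linearly independent from all previous ones. After this loop $D$ therefore carries $\mu_i=\dim\Omega_i$ linearly independent projections on $\Omega_i$, which span it, so by the equivalence above $\lambda_i$ is controllable. I would also verify feasibility of each construction step: a vector with a prescribed projection in $\Omega_i$ and satisfying eq.~\eqref{eq:parallel_condition} always exists, because $\Omega_i\subseteq\mathcal{X}_{or}^{\perp}$ allows one to simply take $D_j$ equal to the desired projection, which then automatically lies in $\mathcal{X}_{or}^{\perp}$; and since at most $\mu_i$ independent projections are ever required inside the $\mu_i$-dimensional space $\Omega_i$, the linear-independence requirement can always be met, so the inner loop is well posed and terminates.

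Finally I would record the monotonicity that secures correctness across the outer loop: appending columns for subsequent eigenvalues can only enlarge the span of the projections onto any fixed $\Omega_i$, so the controllability already achieved for earlier eigenvalues is never destroyed. Since the outer loop exhausts $\Lambda^{\perp}$ (a finite set, guaranteeing global termination), every unstable eigenvalue of $A_{\perp}$ becomes controllable, whence Theorem~\ref{thm:PBH} yields stabilizability of $(A_{\perp},D_{\perp})$. Together with $D_{\parallel}=0$ from the first paragraph, this shows the matrix $D$ returned by Algorithm~1 satisfies both eq.~\eqref{eq:D_par_0} and eq.~\eqref{eq:A_perp_stabilizable}, i.e. Algorithm~1 solves Problem~1.
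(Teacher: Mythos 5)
Your proof is correct and follows essentially the same two-part structure as the paper's: $D_{\parallel}=0$ from the zero-column-sum condition \eqref{eq:parallel_condition} acting against the cluster-constant rows of $T^{\parallel}$, and stabilizability by reducing the PBH test for each $\lambda_i\in\Lambda^{\perp}$ to the requirement that the projections of the columns of $D$ span $\Omega_i$. The only difference is one of rigor: you explicitly justify the key equivalence via the orthogonality identity $(T^{\parallel})^{T}T^{\parallel}+(T^{\perp})^{T}T^{\perp}=I_N$ (so that $\hat v^{T}(T^{\perp}d)=v^{T}\,proj_{\Omega_i}(d)$), and you add the feasibility and monotonicity checks, all of which the paper leaves implicit.
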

\begin{proof}
To prove that any matrix selected by Algorithm 1 satisfies condition \eqref{eq:D_par_0} it suffices to note that from eq. \eqref{eq:tilde_D} and the structure of the matrix $T_{or}$ in eq. \eqref{eq:T_or} we have that the $i$-th element of the $j$-th column of $D_{\parallel}$ is obtained as $\sum_{k\in \mathcal{C}_i}D_j(k)$. Then, eq. \eqref{eq:D_par_0} follows directly from eq. \eqref{eq:parallel_condition}. 
On the other hand, note that from Theorem \ref{thm:PBH} and Definition \ref{def:PBH}, to prove that any matrix selected according to Algorithm 1 satisfies \eqref{eq:A_perp_stabilizable} it suffices to show that for each eigenvector, say $v^{\perp}_j$ of $A_{\perp}$ associated to an eigenvalue that is encompassed in the set $\Lambda^{\perp}$ there exists a column $D^{\perp}_l$ of the matrix $D_{\perp}$ such that ${v^{\perp}_j}^T D^{\perp}_l \neq 0$. In turn, as any $\mu_i$ vectors of $\Omega_i$ can be chosen as eigenvectors of $A_{\perp}$, and as the columns of $D_{\perp}$ are the projection of the columns of $D$ on the orthogonal complement to the group consensus subspace, ensuring that for any $v^{\perp}_j$ associated to an eigenvalue $\lambda_i\in \Lambda^{\perp}$ there exists $D^{\perp}_l$ such that ${v^{\perp}_j}^T D^{\perp}_l \neq 0$ is equivalent to ensuring that there exist $\mu_i$ columns of $D$ that span $\Omega_i$. As this is ensured by the inner while loop in Algorithm 1 thanks to eqs. \eqref{eq:nonzero_proj} and \eqref{eq:nonparalle_proj}, the thesis follows.
\end{proof}
\begin{remark}
Note that while indeed the symmetries of the pair $(A,[B \  D])$, with $D$ selected according to Algorithm 1, are not the same of that of the pair $(A,B)$, this has no effect on the dynamics along the group consensus manifold as from Problem 1 and Theorem \ref{thm:alg_ok} we know that $D_{\parallel}=0$. Consistently, as the control signal $w$ is conceived to be a stabilizing feedback action, it will vanish asymptotically, and in the absence of perturbations the network dynamics will revert to that in eq. \eqref{eq:contr_net}.
\end{remark}
\begin{corollary} \label{cor:min_inputs}
The number of independent input signals required to solve Problem 1 is lower bounded by $$\max_{i:\lambda_i \in \Lambda^{\perp}} |\Omega_i|.$$
\end{corollary}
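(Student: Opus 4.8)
The plan is to prove the bound by necessity: I will show that any matrix $D$ solving Problem 1 must supply, for each eigenvalue $\lambda_i \in \Lambda^{\perp}$, at least $|\Omega_i|$ columns whose projections onto $\Omega_i$ are linearly independent, and then observe that since the \emph{same} set of columns must work simultaneously for all such eigenvalues, the total number of inputs $W$ is bounded below by the largest of these requirements rather than by their sum.

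First I would fix an eigenvalue $\lambda_i \in \Lambda^{\perp}$. Because $\lambda_i$ has nonnegative real part and is an eigenvalue of $A_{\perp}$, Theorem \ref{thm:PBH} requires, for $(A_{\perp},D_{\perp})$ to be stabilizable, that $\lambda_i$ be controllable; by Definition \ref{def:PBH} this means every eigenvector $\hat v$ of $A_{\perp}$ associated to $\lambda_i$ must satisfy $\hat v^{T}D^{\perp}_l \neq 0$ for at least one column $D^{\perp}_l$ of $D_{\perp}$. The key step is to recast this as a spanning condition on $\Omega_i$. Since the eigenspace of $A$ for $\lambda_i$ splits, under $T_{or}$, into its parallel part and its orthogonal part $\Omega_i$, the eigenvectors of $A_{\perp}$ for $\lambda_i$ are exactly the coordinate representations of the vectors of $\Omega_i$; writing $v=(T^{\perp})^{T}\hat v\in\Omega_i$ and using that $D_{\parallel}=0$ forces every column $D_l$ of $D$ into the orthogonal complement of $\mathcal{X}_{or}$, the orthogonality of $T_{or}$ gives $\hat v^{T}D^{\perp}_l = v^{T}D_l$. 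Moreover, for $v\in\Omega_i$ the component of $D_l$ orthogonal to $\Omega_i$ contributes nothing, so $v^{T}D_l = v^{T}proj_{\Omega_i}(D_l)$. Hence there exists a nonzero $v\in\Omega_i$ orthogonal to every column of $D_{\perp}$ precisely when the vectors $proj_{\Omega_i}(D_1),\dots,proj_{\Omega_i}(D_W)$ fail to span $\Omega_i$, and in that case $v$ is a genuine eigenvector of $A_{\perp}$ witnessing that $\lambda_i$ is uncontrollable.

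Stabilizability therefore forces these projections to span $\Omega_i$, a space of dimension $\mu_i=|\Omega_i|$, and spanning a $|\Omega_i|$-dimensional space with the projections of $W$ columns requires $W\geq|\Omega_i|$. Finally, since this inequality holds for every $\lambda_i\in\Lambda^{\perp}$ and is imposed on one and the same collection of $W$ columns, I would conclude $W\geq \max_{i:\lambda_i\in\Lambda^{\perp}}|\Omega_i|$. I expect the only delicate point to be the equivalence established in the second paragraph, namely verifying that a deficiency in the span of $\{proj_{\Omega_i}(D_l)\}_l$ produces a \emph{bona fide} uncontrollable eigenvector of $A_{\perp}$ and that it suffices to reason with projections onto $\Omega_i$ rather than with the full columns; once this is in place, the dimension count and the passage from the per-eigenvalue bounds to their maximum are immediate.
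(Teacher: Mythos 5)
Your proof is correct and follows essentially the same route as the paper's: invoke the PBH test for each $\lambda_i\in\Lambda^{\perp}$, conclude that the projections of the columns of $D$ onto $\Omega_i$ must span $\Omega_i$ (hence at least $|\Omega_i|$ columns), and take the maximum since one set of columns must serve all eigenvalues simultaneously. Your version is in fact slightly more careful than the paper's, which phrases the spanning requirement as ``nonzero and non-parallel projections'' where linear independence is what is actually needed, and you make explicit the step the paper leaves implicit, namely that a span deficiency yields a genuine uncontrollable eigenvector of $A_{\perp}$.
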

\begin{proof}
Let us start by noting that any vector in $\Omega_i$ is an eigenvector of $A_{\perp}$ associated to $\lambda_i$. Hence, for the stabilizability condition in Theorem \ref{thm:PBH} to be verified for the pair $(A_{\perp},D_{\perp})$, we must have that for all $\lambda_i \in \Lambda^{\perp}$ there exist $|\Omega_i|$ columns of $D^{\perp}$, and thus also of $D$, with nonzero and non-parallel projection on $\Omega_i$. Hence, the pair $(A_{\perp},D_{\perp})$ can be stabilizable only if the number of columns of $D$ is at least equal to $\max_{i:\lambda_i \in \Lambda^{\perp}} |\Omega_i|$ which proves our statement.
\end{proof}
After giving a bound on the number of input signals required to solve Problem 1, let us now give a bound on the number of drivers, i.e., the number of rows of $D$ encompassing at least a nonzero entry, required to solve Problem 1. To do so, let us define the operator 
\begin{equation*}
    |\cdot|_{\emptyset}:=
    \begin{cases}
    |\cdot| \ \mathrm{if} \ |\cdot|>0\\
    -1 \ \mathrm{otherwise}
    \end{cases}
\end{equation*}
\begin{corollary} \label{cor:min_drivers}
The number of rows of the matrix $D$ with at least one nonzero entry required to solve Problem 1 is lower bounded by 
$$\max_{i:\lambda_i \in \Lambda^{\perp}} |\Omega_i|_{\emptyset} + 1.$$
\end{corollary}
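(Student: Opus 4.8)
The plan is to fix an arbitrary matrix $D$ that solves Problem 1 and bound its number of driver nodes (nonzero rows) from below, by playing the stabilizability requirement against the structural constraint $D_\parallel=0$. Set $m^\star:=\max_{i:\lambda_i\in\Lambda^\perp}|\Omega_i|$ and let $i^\star$ attain it; if $\Lambda^\perp=\emptyset$ no transverse stabilization is needed and the bound is vacuously $0$, so I would assume $m^\star=\mu_{i^\star}\geq 1$ (which holds for every $\lambda_i\in\Lambda^\perp$, since any such eigenvalue of $A_\perp$ supplies at least one eigenvector in the orthogonal complement of $\mathcal{X}_{or}$). Exactly as in the proof of Corollary \ref{cor:min_inputs}, Theorem \ref{thm:PBH} together with Definition \ref{def:PBH} forces the columns of $D$ to have projections onto $\Omega_{i^\star}$ that span all of $\Omega_{i^\star}$; equivalently, writing $\mathcal{R}(D)$ for the column space, stabilizability requires $\dim\,\mathrm{proj}_{\Omega_{i^\star}}(\mathcal{R}(D))=m^\star$.

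The heart of the argument is to convert $D_\parallel=0$ into a dimension count. Let $R$ be the set of driver nodes, $r:=|R|$, and let $c$ be the number of clusters meeting $R$. Every column of $D$ is supported on $R$ and, by eq. \eqref{eq:parallel_condition}, has zero sum over each cluster, so $\mathcal{R}(D)$ lies in the subspace $\mathcal{S}:=\{v\in\mathbb{R}^N : v_k=0\ \forall k\notin R,\ \sum_{k\in\C_l}v_k=0\ \forall l\}$. I would then compute $\dim\mathcal{S}=r-c$: inside each cluster $\C_l$ that meets $R$ in $t_l\geq 1$ nodes, the single zero-sum constraint removes exactly one degree of freedom and contributes $t_l-1$ dimensions, and since the constraints attached to distinct clusters involve disjoint sets of coordinates they are independent, so the dimensions simply add to $\sum_l (t_l-1)=r-c$.

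The conclusion then follows by chaining inequalities. Because projection cannot increase dimension,
$$m^\star=\dim\,\mathrm{proj}_{\Omega_{i^\star}}(\mathcal{R}(D))\leq\dim\mathcal{R}(D)\leq\dim\mathcal{S}=r-c,$$
so $r\geq m^\star+c$. Since $m^\star\geq 1$ the matrix $D$ cannot vanish, hence at least one cluster is occupied and $c\geq 1$; therefore $r\geq m^\star+1=\max_{i:\lambda_i\in\Lambda^\perp}|\Omega_i|_\emptyset+1$, where the $|\cdot|_\emptyset$ notation merely records the degenerate case $\Lambda^\perp=\emptyset$ in which the bound reduces to $0$.

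The step I expect to be the main obstacle—and the precise source of the ``$+1$''—is the identity $\dim\mathcal{S}=r-c$ and its interpretation: a driver node that is alone in its cluster is forced by the zero-cluster-sum constraint to carry value $0$, so it contributes no usable direction toward spanning $\Omega_{i^\star}$. Making this ``one wasted node per occupied cluster'' rigorous, and checking that the per-cluster zero-sum constraints are genuinely independent so that the count is additive, is the only nontrivial ingredient; everything else reuses the PBH/spanning reasoning already established for Corollary \ref{cor:min_inputs}.
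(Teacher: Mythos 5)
Your proof is correct and follows essentially the same route as the paper's: the PBH/spanning requirement forces $\mathrm{rank}(D)\ge m^\star:=\max_{i:\lambda_i\in\Lambda^{\perp}}|\Omega_i|$, and the zero-cluster-sum constraint coming from $D_{\parallel}=0$ costs at least one additional nonzero row, yielding $r\ge m^\star+1$. Your explicit dimension count $\dim\mathcal{S}=r-c$ is in fact a slight sharpening (it gives $r\ge m^\star+c$), which the paper does not use here but effectively exploits later in Corollary \ref{cor:min_drivers_b}.
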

\begin{proof}
From corollary \ref{cor:min_inputs}, we know that the number of columns of $D$ required to stabilize $\mathcal{X}_{or}$ is lower bounded by $\max_{i:\lambda_i \in \Lambda^{\perp}} |\Omega_i|$. As the projections of these columns on $\Omega_{i^*}$, with $i^*=\mathrm{argmax}_{i:\lambda_i \in \Lambda^{\perp}}|\Omega_i|$, must be nonzero and non parallel, then the rank of the matrix $D$ is lower bounded by $\max_{i:\lambda_i \in \Lambda^{\perp}} |\Omega_i|$. On the other hand, to ensure the condition in \eqref{eq:D_par_0} is fulfilled, each column of $D$ must be parallel to $\mathcal{X}_{or}$ which is true iff the columns of $D$ verify eq. \eqref{eq:parallel_condition}, that is, their elements sum to zero. Hence, for the matrix $D$ to be zero column sum and have at least rank $\max_{i:\lambda_i \in \Lambda^{\perp}} |\Omega_i|$ it must have at least $\max_{i:\lambda_i \in \Lambda^{\perp}} |\Omega_i|_{\emptyset}+1$ rows encompassing nonzero entries thus proving our statement. 
\end{proof}
Corollary \ref{cor:min_drivers} provides a bound on the number of driver nodes required to solve Problem 1. We will now show how to exploit the clusters induced by the network symmetries to give a different bound from that provided in Corollary \ref{cor:min_drivers}. To do so, let us denote by $\Omega_i^j$ the subspace of $\Omega_i$ that is spanned by vectors $e_l$, $l=1,\dots,|\Omega_i^j|$ such that each element $e_{lm}$ of $e_l$ is nonzero  iff node $m$ is encompassed in cluster $\mathcal{C}^j$. Roughly speaking, $\Omega_{i}^j$ is the $j$-th cluster specific subspace of $\Omega_i$. As in general $\Omega_i$ cannot be completely spanned by cluster specific vectors, we have that $\Omega_i = \cup_{j=1}^K \Omega_i^j  + \tilde \Omega_i$, where $\tilde \Omega_i$ is thus the subspace of $\Omega_i$ that cannot be spanned by cluster specific vectors. Finally let us relabel the network nodes so that node $i$ belongs to $\mathcal{C}_j$ if $|\mathcal{C}_{j-1}|<i\leq |\mathcal{C}_{j}|$, with $|\mathcal{C}_{0}|=0$ as $\mathcal{C}_{0}$ does not exist. Then, the matrix $D$ can be decomposed in blocks as follows
\begin{equation}
D= \left[
\begin{array}{c}
D^1  \\
D^2\\
\vdots\\
D^K
\end{array}
\right]    
\end{equation}
with each $D^j$ having $|\mathcal{C}_j|$ rows.
\begin{corollary} \label{cor:min_drivers_b}
The number of rows of the matrix $D$  encompassing nonzero entries required to solve Problem 1 is lower bounded by 
\begin{equation}\label{eq:bound_on_drivers_b}
    \sum_{j=1}^{K}\left( \max_{i:\lambda_i \in \Lambda^{\perp}} |\Omega_i^j|_{\emptyset} + 1\right).
\end{equation}
\end{corollary}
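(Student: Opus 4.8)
The plan is to reuse the counting argument of Corollary \ref{cor:min_drivers}, but to carry it out \emph{cluster by cluster}, exploiting the fact that the cluster-specific eigenvectors collected in $\Omega_i^j$ are supported on a single cluster and can therefore only be influenced through the corresponding block $D^j$ of $D$. Since the rows of $D$ partition exactly into the blocks $D^1,\dots,D^K$, a lower bound on the number of nonzero rows of each block will add up to the desired bound \eqref{eq:bound_on_drivers_b}.

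First I would fix a cluster $\mathcal{C}_j$ and note that every nonzero vector in $\Omega_i^j \subseteq \Omega_i$ is (identified with) an eigenvector of $A_\perp$ associated with $\lambda_i$, and that by the definition of $\Omega_i^j$ its support is contained in $\mathcal{C}_j$. Consequently, for any column $d$ of $D$, the inner product $v^T d$ with such a vector $v$ depends only on the restriction $d^j$ of $d$ to the rows indexed by $\mathcal{C}_j$. Running the PBH reasoning of Corollary \ref{cor:min_inputs} but restricted to the subspace $\Omega_i^j$, the stabilizability requirement of Theorem \ref{thm:PBH} forces the projections of the columns of $D$ onto $\Omega_i^j$ to span $\Omega_i^j$; as these projections factor through the restrictions $d^j$, the block $D^j$ must have rank at least $|\Omega_i^j|$. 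Maximizing over $\lambda_i \in \Lambda^\perp$ gives $\mathrm{rank}(D^j) \geq r_j$, where $r_j := \max_{i:\lambda_i \in \Lambda^\perp}|\Omega_i^j|$.

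Next I would invoke condition \eqref{eq:parallel_condition} (equivalently \eqref{eq:D_par_0}), which states that each column of $D$ sums to zero over the entries indexed by any cluster; in particular every column of $D^j$ sums to zero. A matrix with $|\mathcal{C}_j|$ rows, exactly $p$ of which are nonzero, whose columns all sum to zero has every column supported on those $p$ rows and, once restricted to them, lying in the $(p-1)$-dimensional zero-sum subspace of $\mathbb{R}^p$; hence its rank is at most $p-1$. Combined with $\mathrm{rank}(D^j)\geq r_j$, this forces $D^j$ to have at least $r_j+1$ nonzero rows whenever $r_j>0$, and it is trivially $0$ when $r_j=0$. The operator $|\cdot|_\emptyset$ encodes both cases at once, yielding the per-cluster lower bound $\max_{i:\lambda_i \in \Lambda^\perp}|\Omega_i^j|_\emptyset + 1$, after which summing over $j=1,\dots,K$ delivers \eqref{eq:bound_on_drivers_b}.

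The main obstacle I anticipate is the decoupling step: one must argue carefully that the cluster-specific eigenvectors in $\Omega_i^j$ impose a rank constraint on the block $D^j$ \emph{alone}, and that these constraints are independent across clusters, so that the necessary driver counts add rather than overlap. The treatment of the residual subspace $\tilde\Omega_i$ requires no extra work, since controlling those directions can only increase the number of required drivers and therefore cannot weaken the lower bound obtained from the cluster-specific parts.
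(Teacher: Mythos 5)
Your proposal is correct and follows essentially the same route as the paper: restrict the PBH/rank argument of Corollary \ref{cor:min_inputs} to each cluster-specific subspace $\Omega_i^j$, observe that this constrains only the block $D^j$, combine the resulting rank bound with the zero-column-sum condition \eqref{eq:parallel_condition} to get $|\Omega_i^j|+1$ nonzero rows per cluster, and sum over the disjoint blocks. Your write-up is in fact somewhat more explicit than the paper's on the two points it leaves implicit (why the projections factor through $D^j$ alone, and why the per-cluster counts add), which is a welcome clarification rather than a divergence.
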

\begin{proof}

From Theorem \ref{thm:PBH}, Definition \ref{def:PBH}, and eq. \eqref{eq:D_par_0}, we know that to solve Problem 1 we need to ensure that each $\lambda_i \in \Lambda^{\perp}$ is made controllable by a matrix $D$ such that $\sum_{l\in \mathcal{C}_j}D_{li} = 0$ $\forall i$. Moreover, from Corollary \ref{cor:min_inputs} and as $\Omega_i^j$ is spanned by cluster specific vectors, it is possible to show that to ensure $\lambda_i\in \Lambda^{\perp}$ is controllable we need that at least $|\Omega_i^j|$ columns of the matrix $D^j$ have nonzero and non parallel projection on $\Omega_i^j$. Hence, these columns must define a matrix that is full rank but also zero column sum so to ensure fulfillment of eq. \eqref{eq:D_par_0}. This implies that stabilizing any $\lambda_i \in \Lambda^{\perp}$ requires that at least $|\Omega_i^j|+1$ rows of $D^j$ encompass a nonzero entry for all $j$ such that $\Omega_i^j \neq \emptyset$, and thus the total number of rows of the matrix $D$ encompassing a nonzero entry is lower bounded by the quantity in \eqref{eq:bound_on_drivers_b}.
\end{proof}

\begin{remark}\label{rem:c_spec_eigs}
The problem of identifying the cluster specific vectors spanning the subspaces $\Omega_i^j$ for all $i$ and $j$ can be easily solved using the IRR transformation $T_{or}$. Indeed, one of the properties of this transformation is to have cluster specific rows that can be linearly combined through the coefficients of the eigenvectors of the corresponding block of $\tilde A$ to generate eigenvectors of $A$. Therefore, each eigenvector of $A$ associated to an eigenvalue $\lambda_i$ obtained through this procedure either belongs to (i) $\Omega_i^j$ if the rows that are combined to obtain them are all associated to the same cluster $\mathcal{C}_j$, or (ii) $\tilde \Omega_i$ otherwise.
\end{remark}

%%% NUMERICAL EXAMPLE SECTION%%%
\section{Numerical example}

We consider the $N=8$ node network in Fig. \ref{fig:example_net}. A study of the symmetries of the pair $(A,B)$ shows that there are $K=3$ orbital clusters, $\C_1\cup \C_2 \cup \C_3= \mathcal{V}$ and $\C_1 = \left\lbrace 1,2,3,4\right\rbrace$, $\C_2= \left\lbrace 5, 6 \right\rbrace$, $\C_3= \left\lbrace 7, 8 \right\rbrace$. The corresponding indicator matrix is 
\begin{equation}\label{eq:ind_matrix}
{E^{\Pi}}^T= \left[\begin{array}{cccccccc}
1 & 1 & 1 & 1& 0&0 & 0 &0 \\
0 & 0 & 0 & 0& 1&1 & 0 &0 \\
0 & 0 & 0 & 0& 0&0 & 1 &1 
\end{array}\right].   
\end{equation}
We tackle the problem of steering the network state towards the group consensus value $[\mathbf{1}_{1 \times 4} \;  \mathbf{2}_{1\times 2} \;  \mathbf{3}_{1\times 2}]^T$ in $t_f = 5$ seconds. To do so, according to the results in Section \ref{sec:gc_contr} we must first decouple the dynamics along and transverse to the group consensus subspace by leveraging the state transformation $z=T_{or}x$ with
\begin{equation}\label{eq:}
T_{or}=\setlength{\arraycolsep}{1pt}
\left[ \begin{array}{cccccccc}
0.5    &0.5  &0.5  &0.5   &0    &0  &0   &0\\
0  &0    &0  &0    &0  &0    &\sqrt{2}^{-1}  &\sqrt{2}^{-1}\\
0 &0 &0   &0    &\sqrt{2}^{-1}   &\sqrt{2}^{-1} &0   &0\\
0.5  &0.5    &-0.5 &-0.5    &0  &0    &0 &0\\
0   &0 &0   &0 &\sqrt{2}^{-1} &-\sqrt{2}^{-1} &0 &0\\
\sqrt{2}^{-1}   &-\sqrt{2}^{-1}  &0  &0  &0  &0  &0  &0\\
0   &0    &\sqrt{2}^{-1}   &-\sqrt{2}^{-1} &0   &0 &0   &0\\
0  &0    &0  &0    &0  &0  &\sqrt{2}^{-1}    &-\sqrt{2}^{-1}
    \end{array}\right],
\end{equation}
obtaining
\begin{equation}\label{eq:ex_mats}
\setlength{\arraycolsep}{2.5pt}
    A_{\parallel} = \left[
    \begin{array}{ccc}
         0 &0 &\sqrt{2}\\
         0 &0 &2\\
         \sqrt{2} &2 &0
    \end{array}
    \right]\!,\  
    B_{\parallel} = \left[
    \begin{array}{c}
    0     \\
    \sqrt{2}\\
    0
    \end{array}
    \right]\!,\ 
    A_{\perp} = \left[
    \begin{array}{ccccc}
         0&-\sqrt{2}&0&0&0\\
         -\sqrt{2}&0&0&0&0\\
         0&0&0&0&0\\
         0&0&0&0&0\\
         0&0&0&0&0
    \end{array}
    \right]\!,\ 
    B_{\perp} = \left[
    \begin{array}{c}
    0\\    0\\    0\\ 0\\ 0
    \end{array}
    \right]\!.
\end{equation}
Consistently with Corollary \ref{cor:B_perp}, we obtain that $B_{\perp}=0$. Moreover, the pair $(A_{\parallel},B_{\parallel})$
\begin{comment}
\begin{equation}\label{eq:ex_mats}
    A_{\parallel} = \left[
    \begin{array}{ccc}
         0 &0 &\sqrt{2}\\
         0 &0 &2\\
         \sqrt{2} &2 &0
    \end{array}
    \right], B_{\parallel} = \left[
    \begin{array}{c}
    0     \\
    \sqrt{2}\\
    0
    \end{array}
    \right]
\end{equation}
\end{comment}
defines the dynamics of the quotient network, whose three node structure is portrayed in Fig. \ref{fig:example_net}. As the reader may easily check, the pair $(A_{\parallel},B_{\parallel})$ is controllable, and thus to control the dynamics along $\mathcal{X}_{or}$ we pose the following minimum energy control problem:
\begin{figure}
    \centering
	\begin{minipage}{0.35\columnwidth}
	\resizebox{\textwidth}{!}{	
	\begin{tikzpicture}[auto, on grid, semithick, style_gen/.style = {circle, ultra thick, minimum size =1.5cm, inner sep=1pt}, 
	]	
	\node[style_gen, fill = red, draw=red, line width=0.3cm,] at (7.50, 10.00)(A1) {\Huge 2};
	\node[style_gen, fill = red, draw=red, line width=0.3cm,] at (2.50,10.00)(A2) {\Huge 1};
	\node[style_gen, fill = red, draw=red, line width=0.3cm,] at (7.50,0)(A3) {\Huge 4};
	\node[style_gen, fill = red, draw=red, line width=0.3cm,] at (2.5,0)(A4) {\Huge 3};
	\node[style_gen, fill = cyan, draw=cyan, line width=0.3cm,] at (7.5,5.00)(A5) {\Huge 8};
	\node[style_gen, fill = cyan, draw=cyan, line width=0.3cm,] at (2.5,5.00)(A6) {\Huge 7};
	\node[style_gen, fill = yellow, draw=yellow, line width=0.3cm,] at (5.00,7.50)(A7) {\Huge 5};
	\node[style_gen, fill = yellow, draw=yellow, line width=0.3cm,] at (5.00,2.50)(A8) {\Huge 6};

	\begin{scope}[on background layer]
	\draw(A1) -- (A7);
	\draw(A2) -- (A7);
	\draw(A7) -- (A6);
	\draw(A7) -- (A5);
	\draw(A8) -- (A6);
	\draw(A8) -- (A5);
	\draw(A8) -- (A3);
	\draw(A8) -- (A4);
	% ingressi
   \draw[->, >=stealth] (2.5,7) node[above]{\LARGE $u$} to (A6);
   \draw[->, >=stealth] (7.5,7) node[above]{\LARGE $u$} to (A5);
%	\draw[->, >=stealth] (0,1) node[below]{\LARGE u} to (A1);
%	\draw[->, >=stealth] (10,1) node[below]{\LARGE u} to (A3);	

	\end{scope}
	\end{tikzpicture}
}
\end{minipage}
\hspace*{2cm}
\begin{minipage}{.45\columnwidth}
\resizebox{\textwidth}{!}{	
$
\setlength{\arraycolsep}{5pt}
	A = \left[
	\begin{array}{cccccccc}
		0 & 0 & 0 & 0 & 1 & 0 & 0 & 0 
		\\ 
		0 & 0 & 0 & 0 & 1 & 0 & 0 & 0 
		\\ 
		0 & 0 & 0 & 0 & 0 & 1 & 0 & 0 
		\\ 
		0 & 0 & 0 & 0 & 0 & 1 & 0 & 0
		\\ 
		1 & 1 & 0 & 0 & 0 & 0 & 1 & 1
		\\
		0 & 0 & 1 & 1 & 0 & 0 & 1 & 1
		\\
		0 & 0 & 0 & 0 & 1 & 1 & 0 & 0
		\\
		0 & 0 & 0 & 0 & 1 & 1 & 0 & 0\\
	\end{array}\right]
$\phantom{$^T$}}
\\[10pt]
\resizebox{\textwidth}{!}{	
\setlength{\arraycolsep}{4.6pt}
$
	B = \left[ \begin{array}{cccccccc}
		0 &0 & 0& 0& 0& 0& 1& 1 
	\end{array}\right]^T
	$
}\\[1cm]
\resizebox{\textwidth}{!}{
\begin{tikzpicture}[auto, on grid, semithick, style_gen/.style = {circle, ultra thick, minimum size =2.3cm, inner sep=1pt}, 
	]	
	\node[style_gen, fill = red, draw=red, line width=0.3cm,] at (0, 0.00)(C1) {\Huge $\mathcal{C}_1$};
	\node[style_gen, fill = yellow, draw=yellow, line width=0.3cm,] at (5,0.00)(C2) {\Huge $\mathcal{C}_2$};
	\node[style_gen, fill = cyan, draw=cyan, line width=0.3cm,] at (10, 0.00)(C3) {\Huge $\mathcal{C}_3$};
	\draw(C1) -- node[above]{\Huge $\sqrt{2}$} (C2) -- node[above]{\Huge $2$} (C3);
	% ingressi
   \draw[->, >=stealth] (10,3) node[above]{\Huge $u$} to (C3);
\end{tikzpicture}
}
\end{minipage}
\caption{A simple 8 node network, with edge weights all equal to one. The coarsest orbital partition of the network shown in the figure has three clusters $\C_1$, $\C_2$, and $\C_3$, colored in red, yellow, and cyan respectively.}
    \label{fig:example_net}
\end{figure}
\begin{equation}\label{eq:opt_quotient}
\begin{aligned}
\min_{u} &\dfrac{1}{2}\int_0^{5} u^{T}(t)u(t)dt\\
s.t.\\
\dot z^{\parallel} = &A_{\parallel}z^{\parallel}+B_{\parallel}u\\
z^{\parallel}(0) = & T^{\parallel} x_0 \\
z^{\parallel}(5) = &T^{\parallel}[\mathbf{1}_{1 \times 4} \;  \mathbf{2}_{1\times 2} \;  \mathbf{3}_{1\times 2}]^T = [2\;\; 3\sqrt{2}\;\; 2\sqrt{2}]^T
\end{aligned}
\end{equation}
where $z^{\parallel}\in \mathbb{R}^3$ is the state vector of the quotient network.

The solution of this optimal control problem is
\begin{equation}\label{eq:opt_input_value}
\begin{aligned}
u^{**}(t)= &B_{\parallel}^Te^ {A_{\parallel}^T(5-t)}W^{-1}(z^{\parallel}(5)-e^{5A_{\parallel}}z(0))\\
=& B_{\parallel}^T(V_{\parallel}^T)^{-1}e^{\Lambda_{\parallel}(5-t)}V_{\parallel}^TW^{-1}(z^{\parallel}(5)-V_{\parallel}^{-1}e^{5\Lambda_{\parallel}}V_{\parallel}z(0))\\
\approx & -0.00003 e^{\sqrt{6}(5-t)}+2.54 e^{-\sqrt{6}(5-t)}+0.732
\end{aligned}
\end{equation}
where
\begin{equation}\label{eq:gram}
W(t_0,t_f)= \int_{t_0}^{t_f} e^{A_{\parallel}(t_f-t)}B_{\parallel}B_{\parallel}^Te^{A_{\parallel}^T(t_f-t)}dt
\end{equation}
is the reachability gramian of the quotient network. Note that the optimal control input is a linear combination of the three eigenmodes corresponding to the three clusters of the orbital partition $\Pi$ of $\mathcal{G}(A,B)$. It's worth underlining that, since the consensus subspace is unstable, numerical computation of the optimal control solution is hard due to the positive eigenvalue $\sqrt{6}$. Notably, due to the low dimensionality of the quotient network, the IRR allows us to solve \eqref{eq:opt_quotient} analytically.

Having dealt with controlling the dynamics along the group consensus subspace, we can now turn to stabilizing the dynamics transverse to this subspace. To this aim, note that the spectrum of the matrix $A_{\perp}$ in \eqref{eq:ex_mats} is composed of the following set of eigenvalues
\begin{equation}
    \lbrace -\sqrt{2}, \ 0, \ \sqrt{2} \rbrace
\end{equation}
with the geometric multiplicity of the null eigenvalue being equal to $3$, and the other two eigenvalues being simple. Hence, in order to apply Algorithm 1, we must first consider that $\Lambda^{\perp}=\lbrace\ 0, \ \sqrt{2}\rbrace$, with $\mu_1 = 3$, and $\mu_2 = 1$. Then, setting $i=1$, and as $D$ is initialized as the empty matrix, then $h_1=0$ as $\Delta$ is the empty set and we can enter the inner while loop. The three vectors spanning $\Omega_1$ are the last three rows of the matrix $T_{or}$ that brings the system in the IRR-coordinates, namely

\small
\begin{equation*}
\setlength{\arraycolsep}{4pt}
\left[ \begin{array}{cccccccc}
\sqrt{2}^{-1}   &-\sqrt{2}^{-1}  &0  &0  &0  &0  &0  &0\\
0   &0    &\sqrt{2}^{-1}   &-\sqrt{2}^{-1} &0   &0 &0   &0\\
0  &0    &0  &0    &0  &0  &\sqrt{2}^{-1}    &-\sqrt{2}^{-1}
    \end{array}\right]^T
\end{equation*}
\normalsize
and a feasible solution that iteratively solves eqs. \eqref{eq:nonzero_proj}-\eqref{eq:parallel_condition} is
\begin{equation}\label{eq:first_step}
    \begin{array}{rcrccccccccl}
    D_1 &=& \big[\hspace*{-2mm}& 1& 0& 0& -1& 0& 0& 0& 0 &\hspace*{-2mm}\big]^T, \\ 
    D_2 &=& \big[\hspace*{-2mm}&0& 0& 0& 0& 0& 0& 1& -1  &\hspace*{-2mm}\big]^T, \\ 
    D_3 &=& \big[\hspace*{-2mm}&0& 0& -1& 1& 0& 0& 0& 0  &\hspace*{-2mm}\big]^T.
    \end{array}
\end{equation}
Hence, we can turn to $i=2$ noting that as the vector 
$$
\left[
\begin{array}{cccccccc}
-0.35 & -0.35 & 0.35 & 0.35 & -0.50 & 0.50 & 0 & 0
\end{array}
\right]^T$$
is a basis for $\Omega_2$, then $h_2=1$ as there already exists a column of $D$, namely $D_1$ in eq. \eqref{eq:first_step} with nonzero projection on $\Omega_2$. Hence, as $\mu_2 = 1$, and $|\Lambda^{\perp}|=2$, the driver node selection procedure comes to an end. Note that this solution achieves both the bound given in Corollary \ref{cor:min_inputs} as well as that given in Corollary \ref{cor:min_drivers_b} and thus minimizes both the number of input signals and the number of driver nodes required to stabilize $\mathcal{X}_{or}$. 

Having performed the selection of the matrix $D$ that ensures stabilizability of the pair $(A_{\perp},D_{\perp})$ we can now turn our attention to designing the stabilizing signal $w$ as
$$w=-Gz_{\perp}$$
with the matrix $G$ being such that the eigenvalues of the matrix $(A_{\perp}-D_{\perp}G)$ are all smaller than or equal to $-\sqrt{2}$, the only negative eigenvalue of $A_{\perp}$ which we do not move. Specifically, we design $G$ so that all the originally nonnegative eigenvalues are placed in $-2$. This selection ensures that the slowest time constant of the transverse dynamics is the one of the only stable eigenvalue we did not touch ($1/\sqrt{2}$). Note that this placement ensures the transverse dynamics become negligible well before the time $t_f=5$ when the dynamics parallel to the group consensus subspace will converge to the target state $z^{\parallel}(5)$.
The designed control inputs can be now used to steer the network towards the group consensus state $[\mathbf{1}_{1\times 4} \ \mathbf{2}_{1\times 2} \ \mathbf{3}_{1\times 2}]$. In Figure \ref{fig:state_ev} we report the network state evolution (panel a) and the control inputs (panel b). As expected, the optimal control input $u^{**}$ in eq. \eqref{eq:opt_input_value}, shown in black in Figure \ref{fig:state_ev}(b) is able to steer the nodes in $\C_1$ to $1$, the nodes in $\C_2$ to $2$ and the nodes in $\C_3$ to $3$ at $t_f=5$. In the meantime, the stabilizing control input $w$ makes the transverse clustered synchronous solution stable, ensuring the network state converges on the cluster consensus subspace. Note that as expected, this control action vanishes in time, as shown in Figure \ref{fig:state_ev}(b).

\begin{figure}
\includegraphics[width=\textwidth]{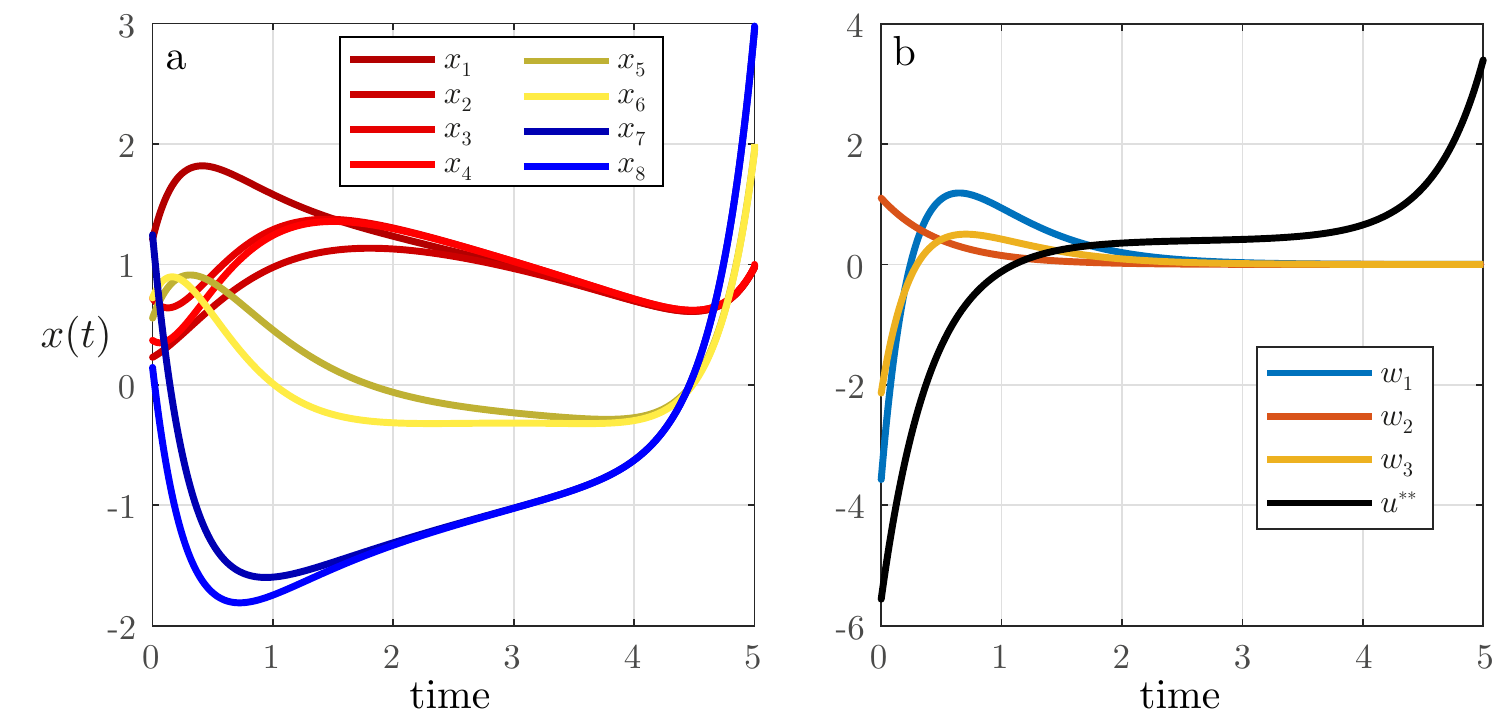}
\centering
\caption{(a) State trajectories of the original network. In red the trajectories of nodes in cluster $\C_1$ and in yellows those of nodes in cluster $\C_2$, in blue those of nodes in cluster $C_3$. (b) Control inputs.}\label{fig:state_ev}
\end{figure}

Applying Algorithm 1 to the eight node network in Fig. \ref{fig:example_net} yielded a selection of six driver nodes in order to stabilize $\mathcal{X}_{or}$, that is, $75\%$ of the network nodes. We now consider a larger network with $N=48$ nodes, shown in Figure \ref{fig:state_ev_b}a, obtained using the algorithm proposed in \citep{klickstein2018generating}. We assume that the same input signal $u$ is injected in all the nodes $i$ such that $21 \leq i \leq 35$ (the yellow nodes in the figure). A study of the symmetries of the pair $(A,B)$ for this network shows that there are $K=3$ orbital clusters with $\mathcal{C}_1:=\lbrace i:i\leq 20\rbrace$, $\mathcal{C}_2:=\lbrace i:21\leq i\leq 36\rbrace$, and $\mathcal{C}_3:=\lbrace i :i\geq 37\rbrace$ defining the quotient network in Fig. \ref{fig:state_ev_b}b. Applying the transformation in eq. \eqref{eq:transformation} and computing the eigenvalues of the matrix $A_{\perp}$ in eq. \eqref{eq:trasformed_net}, we find that $|\Lambda^{\perp}|= 8$ and that $\sum_{i:\lambda_i\in\Lambda^{\perp}} \mu_i = 19$, that is, the number of eigenvectors associated to the non-stable eigenvalues of $A_{\perp}$ is $19$. Hence, in order to ensure the network in Fig. \ref{fig:state_ev_b}a achieves group consensus we need to select an additional set of driver nodes defining the matrix $D$ in eq. \eqref{eq:contr_net_b}. To do so, we apply Algorithm 1 finding that eight input signals, i.e., a matrix $D$ with eight columns, are sufficient to stabilize the dynamics transverse to $\mathcal{X}_{or}$. Notably, only $11$ rows of the matrix $D$ encompass at least one nonzero entry, and thus only $11$ driver nodes, roughly $23\%$ of the network nodes, are sufficient to stabilize $\mathcal{X}_{or}$, five of which were already nodes in which the input signal $u$ is injected. In the appendix we give all the details on the driver node selection procedure for this example, showing that the bound in Corollary \ref{cor:min_drivers_b} is achieved also for the $48$ node network considered here. Fig. \ref{fig:state_ev_b}c, shows the trajectory generated by the joint action of an optimal controller $u^{**}$ which solves the problem 
\begin{equation}\label{eq:opt_quotient_b}
\begin{aligned}
\min_{u} &\dfrac{1}{2}\int_0^{5} u^{T}(t)u(t)dt\\
s.t.\\
\dot z^{\parallel} = &A_{\parallel}z^{\parallel}+B_{\parallel}u\\
z^{\parallel}(0) = & T^{\parallel} x_0 \\
z^{\parallel}(1) = &T^{\parallel}[\mathbf{1}_{1 \times 20} \;  \mathbf{2}_{1\times 16} \;  \mathbf{3}_{1\times 12}]^T = [\sqrt{20}\;\; 8\;\; 3\sqrt{12}]^T
\end{aligned}
\end{equation}
and of a stabilizing state feedback control action $w$ designed on the pair $(A_{\perp},D_{\perp})$ which places all the formerly unstable eigenvalues of $A_{\perp}$ in $-10$. As can be seen from the figure, group consensus is achieved starting from an initial condition that lies outside $\mathcal{X}_{or}$. Figure \ref{fig:state_ev_b}d shows the control inputs $u^{**}$ and $w_i(t)$ $i=1,\dots,8$.

\begin{figure}
\includegraphics[width=\textwidth]{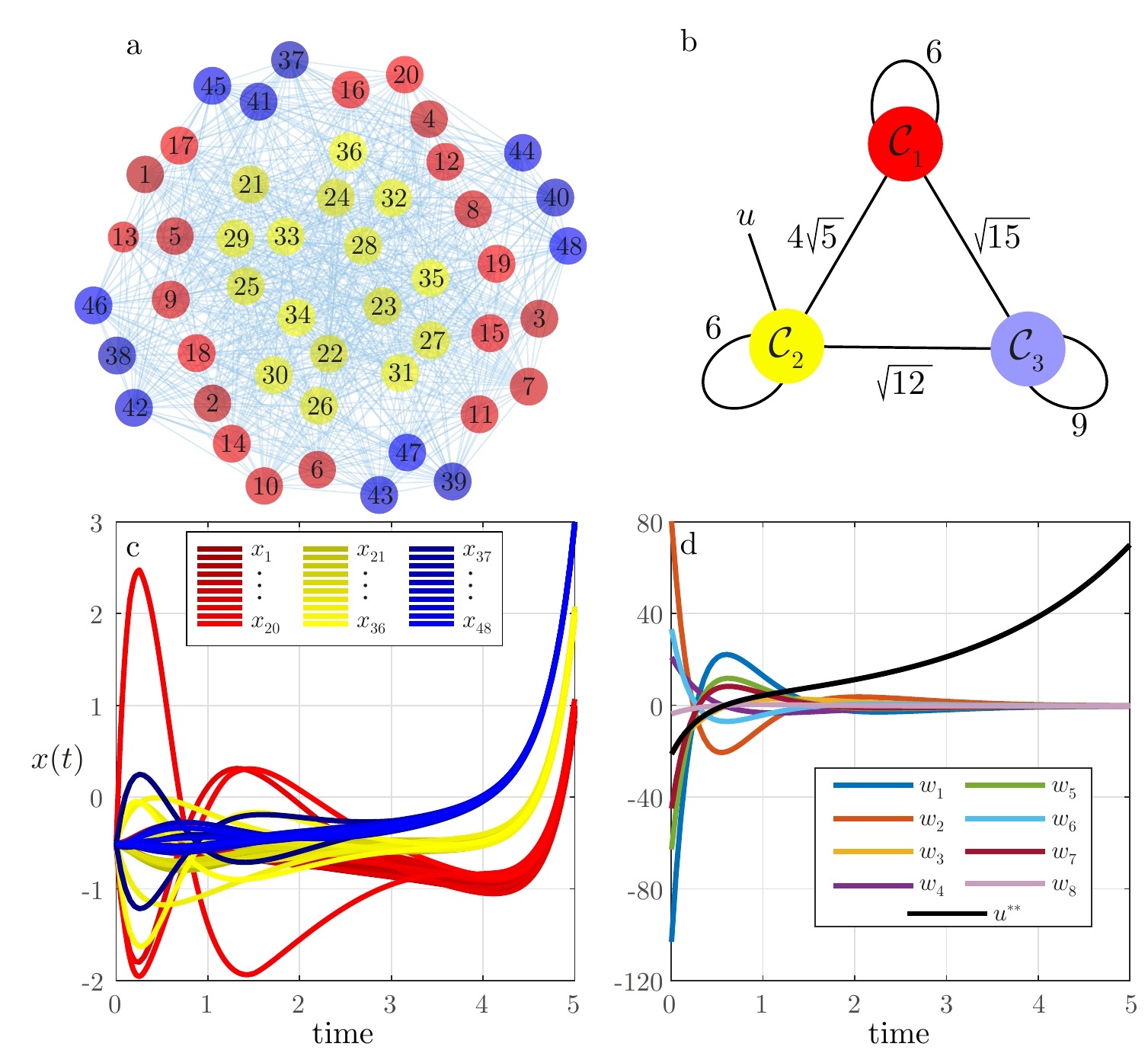}
\centering
\caption{(a) The 48 node random network with 3 orbital cluster and (b) its three node quotient network. (c) Controlled state trajectories of the network nodes driven towards the group consensus state through the joint action of the optimal control input $u$ and of the stabilizing action $w$. In red the trajectories of the nodes in cluster $\C_1$, in yellow those of the nodes in cluster $\C_2$, and in blue those of the nodes in cluster $\C_3$. (d) Time evolution of the control inputs.}\label{fig:state_ev_b}
\end{figure}

\section{Conclusions}

Motivated by the observation that symmetries induce both loss of controllability and the emergence of group consensus, in this work we studied the controllability properties of networks endowed of symmetries. We found that  controllability is lost in directions orthogonal to the group consensus subspace, but it is still possible to control the consensus state either if the network initial condition belongs to the group consensus subspace, or if the subsystem of the dynamics orthogonal to this subspace is asymptotically stable. Moreover, we showed that when the network controllable subspace coincides with the group consensus subspace, we can control consensus by designing control strategies on a lower-dimensional network, the quotient network, thus reducing the computational burden. We also considered the issue of stabilizability of the network dynamics and provided a simple algorithm to place additional control inputs that ensure that the group consensus subspace is stabilizable. By using the IRR transformation of the network symmetry group, we provided bounds on the minimum number of additional inputs and on the number of driver nodes that are needed to achieve stabilizability. We demonstrated our theoretical analysis through two representative numerical examples.

\appendix

\section{Stabilizing the cluster consesnsus on the example in Fig. \ref{fig:state_ev_b}}
The adjacency matrix of the proposed network is\\

\resizebox{\textwidth}{!}{$
\begin{array}{rc}
A=&\left[\begin{array}{cccccccccccccccccccccccccccccccccccccccccccccccc}
0&1&1&1&0&0&0&0&0&0&0&0&0&0&0&0&0&1&1&1&1&0&1&0&1&0&1&0&1&0&1&0&1&0&1&0&1&0&0&0&1&0&0&0&1&0&0&0\\
1&0&1&1&1&0&0&0&0&0&0&0&0&0&0&0&0&0&1&1&0&1&0&1&0&1&0&1&0&1&0&1&0&1&0&1&0&1&0&0&0&1&0&0&0&1&0&0\\
1&1&0&1&1&1&0&0&0&0&0&0&0&0&0&0&0&0&0&1&1&0&1&0&1&0&1&0&1&0&1&0&1&0&1&0&0&0&1&0&0&0&1&0&0&0&1&0\\
1&1&1&0&1&1&1&0&0&0&0&0&0&0&0&0&0&0&0&0&0&1&0&1&0&1&0&1&0&1&0&1&0&1&0&1&0&0&0&1&0&0&0&1&0&0&0&1\\
0&1&1&1&0&1&1&1&0&0&0&0&0&0&0&0&0&0&0&0&1&0&1&0&1&0&1&0&1&0&1&0&1&0&1&0&1&0&0&0&1&0&0&0&1&0&0&0\\
0&0&1&1&1&0&1&1&1&0&0&0&0&0&0&0&0&0&0&0&0&1&0&1&0&1&0&1&0&1&0&1&0&1&0&1&0&1&0&0&0&1&0&0&0&1&0&0\\
0&0&0&1&1&1&0&1&1&1&0&0&0&0&0&0&0&0&0&0&1&0&1&0&1&0&1&0&1&0&1&0&1&0&1&0&0&0&1&0&0&0&1&0&0&0&1&0\\
0&0&0&0&1&1&1&0&1&1&1&0&0&0&0&0&0&0&0&0&0&1&0&1&0&1&0&1&0&1&0&1&0&1&0&1&0&0&0&1&0&0&0&1&0&0&0&1\\
0&0&0&0&0&1&1&1&0&1&1&1&0&0&0&0&0&0&0&0&1&0&1&0&1&0&1&0&1&0&1&0&1&0&1&0&1&0&0&0&1&0&0&0&1&0&0&0\\
0&0&0&0&0&0&1&1&1&0&1&1&1&0&0&0&0&0&0&0&0&1&0&1&0&1&0&1&0&1&0&1&0&1&0&1&0&1&0&0&0&1&0&0&0&1&0&0\\
0&0&0&0&0&0&0&1&1&1&0&1&1&1&0&0&0&0&0&0&1&0&1&0&1&0&1&0&1&0&1&0&1&0&1&0&0&0&1&0&0&0&1&0&0&0&1&0\\
0&0&0&0&0&0&0&0&1&1&1&0&1&1&1&0&0&0&0&0&0&1&0&1&0&1&0&1&0&1&0&1&0&1&0&1&0&0&0&1&0&0&0&1&0&0&0&1\\
0&0&0&0&0&0&0&0&0&1&1&1&0&1&1&1&0&0&0&0&1&0&1&0&1&0&1&0&1&0&1&0&1&0&1&0&1&0&0&0&1&0&0&0&1&0&0&0\\
0&0&0&0&0&0&0&0&0&0&1&1&1&0&1&1&1&0&0&0&0&1&0&1&0&1&0&1&0&1&0&1&0&1&0&1&0&1&0&0&0&1&0&0&0&1&0&0\\
0&0&0&0&0&0&0&0&0&0&0&1&1&1&0&1&1&1&0&0&1&0&1&0&1&0&1&0&1&0&1&0&1&0&1&0&0&0&1&0&0&0&1&0&0&0&1&0\\
0&0&0&0&0&0&0&0&0&0&0&0&1&1&1&0&1&1&1&0&0&1&0&1&0&1&0&1&0&1&0&1&0&1&0&1&0&0&0&1&0&0&0&1&0&0&0&1\\
0&0&0&0&0&0&0&0&0&0&0&0&0&1&1&1&0&1&1&1&1&0&1&0&1&0&1&0&1&0&1&0&1&0&1&0&1&0&0&0&1&0&0&0&1&0&0&0\\
1&0&0&0&0&0&0&0&0&0&0&0&0&0&1&1&1&0&1&1&0&1&0&1&0&1&0&1&0&1&0&1&0&1&0&1&0&1&0&0&0&1&0&0&0&1&0&0\\
1&1&0&0&0&0&0&0&0&0&0&0&0&0&0&1&1&1&0&1&1&0&1&0&1&0&1&0&1&0&1&0&1&0&1&0&0&0&1&0&0&0&1&0&0&0&1&0\\
1&1&1&0&0&0&0&0&0&0&0&0&0&0&0&0&1&1&1&0&0&1&0&1&0&1&0&1&0&1&0&1&0&1&0&1&0&0&0&1&0&0&0&1&0&0&0&1\\
1&0&1&0&1&0&1&0&1&0&1&0&1&0&1&0&1&0&1&0&0&1&1&1&0&0&0&0&0&0&0&0&0&1&1&1&1&0&0&0&1&0&0&0&1&0&0&0\\
0&1&0&1&0&1&0&1&0&1&0&1&0&1&0&1&0&1&0&1&1&0&1&1&1&0&0&0&0&0&0&0&0&0&1&1&0&1&0&0&0&1&0&0&0&1&0&0\\
1&0&1&0&1&0&1&0&1&0&1&0&1&0&1&0&1&0&1&0&1&1&0&1&1&1&0&0&0&0&0&0&0&0&0&1&0&0&1&0&0&0&1&0&0&0&1&0\\
0&1&0&1&0&1&0&1&0&1&0&1&0&1&0&1&0&1&0&1&1&1&1&0&1&1&1&0&0&0&0&0&0&0&0&0&0&0&0&1&0&0&0&1&0&0&0&1\\
1&0&1&0&1&0&1&0&1&0&1&0&1&0&1&0&1&0&1&0&0&1&1&1&0&1&1&1&0&0&0&0&0&0&0&0&1&0&0&0&1&0&0&0&1&0&0&0\\
0&1&0&1&0&1&0&1&0&1&0&1&0&1&0&1&0&1&0&1&0&0&1&1&1&0&1&1&1&0&0&0&0&0&0&0&0&1&0&0&0&1&0&0&0&1&0&0\\
1&0&1&0&1&0&1&0&1&0&1&0&1&0&1&0&1&0&1&0&0&0&0&1&1&1&0&1&1&1&0&0&0&0&0&0&0&0&1&0&0&0&1&0&0&0&1&0\\
0&1&0&1&0&1&0&1&0&1&0&1&0&1&0&1&0&1&0&1&0&0&0&0&1&1&1&0&1&1&1&0&0&0&0&0&0&0&0&1&0&0&0&1&0&0&0&1\\
1&0&1&0&1&0&1&0&1&0&1&0&1&0&1&0&1&0&1&0&0&0&0&0&0&1&1&1&0&1&1&1&0&0&0&0&1&0&0&0&1&0&0&0&1&0&0&0\\
0&1&0&1&0&1&0&1&0&1&0&1&0&1&0&1&0&1&0&1&0&0&0&0&0&0&1&1&1&0&1&1&1&0&0&0&0&1&0&0&0&1&0&0&0&1&0&0\\
1&0&1&0&1&0&1&0&1&0&1&0&1&0&1&0&1&0&1&0&0&0&0&0&0&0&0&1&1&1&0&1&1&1&0&0&0&0&1&0&0&0&1&0&0&0&1&0\\
0&1&0&1&0&1&0&1&0&1&0&1&0&1&0&1&0&1&0&1&0&0&0&0&0&0&0&0&1&1&1&0&1&1&1&0&0&0&0&1&0&0&0&1&0&0&0&1\\
1&0&1&0&1&0&1&0&1&0&1&0&1&0&1&0&1&0&1&0&0&0&0&0&0&0&0&0&0&1&1&1&0&1&1&1&1&0&0&0&1&0&0&0&1&0&0&0\\
0&1&0&1&0&1&0&1&0&1&0&1&0&1&0&1&0&1&0&1&1&0&0&0&0&0&0&0&0&0&1&1&1&0&1&1&0&1&0&0&0&1&0&0&0&1&0&0\\
1&0&1&0&1&0&1&0&1&0&1&0&1&0&1&0&1&0&1&0&1&1&0&0&0&0&0&0&0&0&0&1&1&1&0&1&0&0&1&0&0&0&1&0&0&0&1&0\\
0&1&0&1&0&1&0&1&0&1&0&1&0&1&0&1&0&1&0&1&1&1&1&0&0&0&0&0&0&0&0&0&1&1&1&0&0&0&0&1&0&0&0&1&0&0&0&1\\
1&0&0&0&1&0&0&0&1&0&0&0&1&0&0&0&1&0&0&0&1&0&0&0&1&0&0&0&1&0&0&0&1&0&0&0&0&1&1&1&1&0&1&0&1&1&1&1\\
0&1&0&0&0&1&0&0&0&1&0&0&0&1&0&0&0&1&0&0&0&1&0&0&0&1&0&0&0&1&0&0&0&1&0&0&1&0&1&1&1&1&0&1&0&1&1&1\\
0&0&1&0&0&0&1&0&0&0&1&0&0&0&1&0&0&0&1&0&0&0&1&0&0&0&1&0&0&0&1&0&0&0&1&0&1&1&0&1&1&1&1&0&1&0&1&1\\
0&0&0&1&0&0&0&1&0&0&0&1&0&0&0&1&0&0&0&1&0&0&0&1&0&0&0&1&0&0&0&1&0&0&0&1&1&1&1&0&1&1&1&1&0&1&0&1\\
1&0&0&0&1&0&0&0&1&0&0&0&1&0&0&0&1&0&0&0&1&0&0&0&1&0&0&0&1&0&0&0&1&0&0&0&1&1&1&1&0&1&1&1&1&0&1&0\\
0&1&0&0&0&1&0&0&0&1&0&0&0&1&0&0&0&1&0&0&0&1&0&0&0&1&0&0&0&1&0&0&0&1&0&0&0&1&1&1&1&0&1&1&1&1&0&1\\
0&0&1&0&0&0&1&0&0&0&1&0&0&0&1&0&0&0&1&0&0&0&1&0&0&0&1&0&0&0&1&0&0&0&1&0&1&0&1&1&1&1&0&1&1&1&1&0\\
0&0&0&1&0&0&0&1&0&0&0&1&0&0&0&1&0&0&0&1&0&0&0&1&0&0&0&1&0&0&0&1&0&0&0&1&0&1&0&1&1&1&1&0&1&1&1&1\\
1&0&0&0&1&0&0&0&1&0&0&0&1&0&0&0&1&0&0&0&1&0&0&0&1&0&0&0&1&0&0&0&1&0&0&0&1&0&1&0&1&1&1&1&0&1&1&1\\
0&1&0&0&0&1&0&0&0&1&0&0&0&1&0&0&0&1&0&0&0&1&0&0&0&1&0&0&0&1&0&0&0&1&0&0&1&1&0&1&0&1&1&1&1&0&1&1\\
0&0&1&0&0&0&1&0&0&0&1&0&0&0&1&0&0&0&1&0&0&0&1&0&0&0&1&0&0&0&1&0&0&0&1&0&1&1&1&0&1&0&1&1&1&1&0&1\\
0&0&0&1&0&0&0&1&0&0&0&1&0&0&0&1&0&0&0&1&0&0&0&1&0&0&0&1&0&0&0&1&0&0&0&1&1&1&1&1&0&1&0&1&1&1&1&0
\end{array}\right],\\
B^T=&\left[\begin{array}{cccccccccccccccccccccccccccccccccccccccccccccccc}
0&0&0&0&0&0&0&0&0&0&0&0&0&0&0&0&0&0&0&0&1&1&1&1&1&1&1&1&1&1&1&1&1&1&1&1&0&0&0&0&0&0&0&0&0&0&0&0
\end{array}
\right].
\end{array}
$}

\newpage
Using the algorithm in \citep{peso14}, we compute the transformation to the IRR coordinate system of this network, that is\\[2mm]
\resizebox{\textwidth}{!}{
\setlength\arraycolsep{1pt}$	
100\ T_{or}=\left[\begin{array}{cccccccccccccccccccccccccccccccccccccccccccccccc}
22&22&22&22&22&22&22&22&22&22&22&22&22&22&22&22&22&22&22&22&0&0&0&0&0&0&0&0&0&0&0&0&0&0&0&0&0&0&0&0&0&0&0&0&0&0&0&0
\\0&0&0&0&0&0&0&0&0&0&0&0&0&0&0&0&0&0&0&0&25&25&25&25&25&25&25&25&25&25&25&25&25&25&25&25&0&0&0&0&0&0&0&0&0&0&0&0
\\0&0&0&0&0&0&0&0&0&0&0&0&0&0&0&0&0&0&0&0&0&0&0&0&0&0&0&0&0&0&0&0&0&0&0&0&29&29&29&29&29&29&29&29&29&29&29&29
\\22&-22&22&-22&22&-22&22&-22&22&-22&22&-22&22&-22&22&-22&22&-22&22&-22&0&0&0&0&0&0&0&0&0&0&0&0&0&0&0&0&0&0&0&0&0&0&0&0&0&0&0&0
\\0&0&0&0&0&0&0&0&0&0&0&0&0&0&0&0&0&0&0&0&-25&25&-25&25&-25&25&-25&25&-25&25&-25&25&-25&25&-25&25&0&0&0&0&0&0&0&0&0&0&0&0
\\0&0&0&0&0&0&0&0&0&0&0&0&0&0&0&0&0&0&0&0&0&0&0&0&0&0&0&0&0&0&0&0&0&0&0&0&-29&29&-29&29&-29&29&-29&29&-29&29&-29&29
\\32&0&-32&0&32&0&-32&0&32&0&-32&0&32&0&-32&0&32&0&-32&0&0&0&0&0&0&0&0&0&0&0&0&0&0&0&0&0&0&0&0&0&0&0&0&0&0&0&0&0
\\0&0&0&0&0&0&0&0&0&0&0&0&0&0&0&0&0&0&0&0&35&0&-35&0&35&0&-35&0&35&0&-35&0&35&0&-35&0&0&0&0&0&0&0&0&0&0&0&0&0
\\0&0&0&0&0&0&0&0&0&0&0&0&0&0&0&0&0&0&0&0&0&0&0&0&0&0&0&0&0&0&0&0&0&0&0&0&-41&0&41&0&-41&0&41&0&-41&0&41&0
\\0&-32&0&32&0&-32&0&32&0&-32&0&32&0&-32&0&32&0&-32&0&32&0&0&0&0&0&0&0&0&0&0&0&0&0&0&0&0&0&0&0&0&0&0&0&0&0&0&0&0
\\0&0&0&0&0&0&0&0&0&0&0&0&0&0&0&0&0&0&0&0&0&-35&0&35&0&-35&0&35&0&-35&0&35&0&-35&0&35&0&0&0&0&0&0&0&0&0&0&0&0
\\0&0&0&0&0&0&0&0&0&0&0&0&0&0&0&0&0&0&0&0&0&0&0&0&0&0&0&0&0&0&0&0&0&0&0&0&0&41&0&-41&0&41&0&-41&0&41&0&-41
\\7&-27&-24&13&31&7&-27&-24&13&31&7&-27&-24&13&31&7&-27&-24&13&31&0&0&0&0&0&0&0&0&0&0&0&0&0&0&0&0&0&0&0&0&0&0&0&0&0&0&0&0
\\31&16&-21&-29&3&31&16&-21&-29&3&31&16&-21&-29&3&31&16&-21&-29&3&0&0&0&0&0&0&0&0&0&0&0&0&0&0&0&0&0&0&0&0&0&0&0&0&0&0&0&0
\\-29&24&-17&8&2&-12&20&-27&31&-32&29&-24&17&-8&-2&12&-20&27&-31&32&0&0&0&0&0&0&0&0&0&0&0&0&0&0&0&0&0&0&0&0&0&0&0&0&0&0&0&0
\\-12&20&-27&31&-32&29&-24&17&-8&-2&12&-20&27&-31&32&-29&24&-17&8&2&0&0&0&0&0&0&0&0&0&0&0&0&0&0&0&0&0&0&0&0&0&0&0&0&0&0&0&0
\\5&28&28&5&-23&-31&-14&15&31&22&-5&-28&-28&-5&23&31&14&-15&-31&-22&0&0&0&0&0&0&0&0&0&0&0&0&0&0&0&0&0&0&0&0&0&0&0&0&0&0&0&0
\\-31&-14&15&31&22&-5&-28&-28&-5&23&31&14&-15&-31&-22&5&28&28&5&-23&0&0&0&0&0&0&0&0&0&0&0&0&0&0&0&0&0&0&0&0&0&0&0&0&0&0&0&0
\\-5&-28&22&14&-31&5&28&-22&-14&31&-5&-28&22&14&-31&5&28&-22&-14&31&0&0&0&0&0&0&0&0&0&0&0&0&0&0&0&0&0&0&0&0&0&0&0&0&0&0&0&0
\\31&-14&-22&28&5&-31&14&22&-28&-5&31&-14&-22&28&5&-31&14&22&-28&-5&0&0&0&0&0&0&0&0&0&0&0&0&0&0&0&0&0&0&0&0&0&0&0&0&0&0&0&0
\\1&-19&30&-30&18&1&-19&30&-30&18&1&-19&30&-30&18&1&-19&30&-30&18&0&0&0&0&0&0&0&0&0&0&0&0&0&0&0&0&0&0&0&0&0&0&0&0&0&0&0&0
\\-32&25&-9&-10&26&-32&25&-9&-10&26&-32&25&-9&-10&26&-32&25&-9&-10&26&0&0&0&0&0&0&0&0&0&0&0&0&0&0&0&0&0&0&0&0&0&0&0&0&0&0&0&0
\\29&-27&3&24&-31&12&16&-31&21&7&-29&27&-3&-24&31&-12&-16&31&-21&-7&0&0&0&0&0&0&0&0&0&0&0&0&0&0&0&0&0&0&0&0&0&0&0&0&0&0&0&0
\\-12&-16&31&-21&-7&29&-27&3&24&-31&12&16&-31&21&7&-29&27&-3&-24&31&0&0&0&0&0&0&0&0&0&0&0&0&0&0&0&0&0&0&0&0&0&0&0&0&0&0&0&0
\\-28&-31&-22&-5&15&28&31&22&5&-15&-28&-31&-22&-5&15&28&31&22&5&-15&0&0&0&0&0&0&0&0&0&0&0&0&0&0&0&0&0&0&0&0&0&0&0&0&0&0&0&0
\\-14&5&23&31&28&14&-5&-23&-31&-28&-14&5&23&31&28&14&-5&-23&-31&-28&0&0&0&0&0&0&0&0&0&0&0&0&0&0&0&0&0&0&0&0&0&0&0&0&0&0&0&0
\\32&30&25&17&8&-2&-11&-20&-26&-31&-32&-30&-25&-17&-8&2&11&20&26&31&0&0&0&0&0&0&0&0&0&0&0&0&0&0&0&0&0&0&0&0&0&0&0&0&0&0&0&0
\\-2&-11&-20&-26&-31&-32&-30&-25&-17&-8&2&11&20&26&31&32&30&25&17&8&0&0&0&0&0&0&0&0&0&0&0&0&0&0&0&0&0&0&0&0&0&0&0&0&0&0&0&0
\\0&0&0&0&0&0&0&0&0&0&0&0&0&0&0&0&0&0&0&0&-29&7&35&20&-20&-35&-7&29&29&-7&-35&-20&20&35&7&-29&0&0&0&0&0&0&0&0&0&0&0&0
\\0&0&0&0&0&0&0&0&0&0&0&0&0&0&0&0&0&0&0&0&-20&-35&-7&29&29&-7&-35&-20&20&35&7&-29&-29&7&35&20&0&0&0&0&0&0&0&0&0&0&0&0
\\0&0&0&0&0&0&0&0&0&0&0&0&0&0&0&0&0&0&0&0&35&-29&19&-7&-7&20&-30&35&-35&29&-19&7&7&-20&30&-35&0&0&0&0&0&0&0&0&0&0&0&0
\\0&0&0&0&0&0&0&0&0&0&0&0&0&0&0&0&0&0&0&0&-7&20&-30&35&-35&29&-19&7&7&-20&30&-35&35&-29&19&-7&0&0&0&0&0&0&0&0&0&0&0&0
\\0&0&0&0&0&0&0&0&0&0&0&0&0&0&0&0&0&0&0&0&-9&-28&31&5&-34&22&18&-35&9&28&-31&-5&34&-22&-18&35&0&0&0&0&0&0&0&0&0&0&0&0
\\0&0&0&0&0&0&0&0&0&0&0&0&0&0&0&0&0&0&0&0&34&-22&-18&35&-9&-28&31&5&-34&22&18&-35&9&28&-31&-5&0&0&0&0&0&0&0&0&0&0&0&0
\\0&0&0&0&0&0&0&0&0&0&0&0&0&0&0&0&0&0&0&0&-22&-10&3&17&27&34&35&31&22&10&-3&-17&-27&-34&-35&-31&0&0&0&0&0&0&0&0&0&0&0&0
\\0&0&0&0&0&0&0&0&0&0&0&0&0&0&0&0&0&0&0&0&27&34&35&31&22&10&-3&-17&-27&-34&-35&-31&-22&-10&3&17&0&0&0&0&0&0&0&0&0&0&0&0
\\0&0&0&0&0&0&0&0&0&0&0&0&0&0&0&0&0&0&0&0&-30&11&-18&34&30&-11&18&-34&-30&11&-18&34&30&-11&18&-34&0&0&0&0&0&0&0&0&0&0&0&0
\\0&0&0&0&0&0&0&0&0&0&0&0&0&0&0&0&0&0&0&0&30&-11&-37&11&-30&11&37&-11&30&-11&-37&11&-30&11&37&-11&0&0&0&0&0&0&0&0&0&0&0&0
\\0&0&0&0&0&0&0&0&0&0&0&0&0&0&0&0&0&0&0&0&-16&16&-28&-35&16&-16&28&35&-16&16&-28&-35&16&-16&28&35&0&0&0&0&0&0&0&0&0&0&0&0
\\0&0&0&0&0&0&0&0&0&0&0&0&0&0&0&0&0&0&0&0&21&45&5&6&-21&-45&-5&-6&21&45&5&6&-21&-45&-5&-6&0&0&0&0&0&0&0&0&0&0&0&0
\\0&0&0&0&0&0&0&0&0&0&0&0&0&0&0&0&0&0&0&0&0&0&0&0&0&0&0&0&0&0&0&0&0&0&0&0&35&-35&0&35&-35&0&35&-35&0&35&-35&0
\\0&0&0&0&0&0&0&0&0&0&0&0&0&0&0&0&0&0&0&0&0&0&0&0&0&0&0&0&0&0&0&0&0&0&0&0&-20&-20&41&-20&-20&41&-20&-20&41&-20&-20&41
\\0&0&0&0&0&0&0&0&0&0&0&0&0&0&0&0&0&0&0&0&0&0&0&0&0&0&0&0&0&0&0&0&0&0&0&0&-35&20&0&-20&35&-41&35&-20&0&20&-35&41
\\0&0&0&0&0&0&0&0&0&0&0&0&0&0&0&0&0&0&0&0&0&0&0&0&0&0&0&0&0&0&0&0&0&0&0&0&-20&35&-41&35&-20&0&20&-35&41&-35&20&0
\\0&0&0&0&0&0&0&0&0&0&0&0&0&0&0&0&0&0&0&0&0&0&0&0&0&0&0&0&0&0&0&0&0&0&0&0&-38&-32&6&38&32&-6&-38&-32&6&38&32&-6
\\0&0&0&0&0&0&0&0&0&0&0&0&0&0&0&0&0&0&0&0&0&0&0&0&0&0&0&0&0&0&0&0&0&0&0&0&-15&26&40&15&-26&-40&-15&26&40&15&-26&-40
\\0&0&0&0&0&0&0&0&0&0&0&0&0&0&0&0&0&0&0&0&0&0&0&0&0&0&0&0&0&0&0&0&0&0&0&0&-35&-20&0&20&35&41&35&20&0&-20&-35&-41
\\0&0&0&0&0&0&0&0&0&0&0&0&0&0&0&0&0&0&0&0&0&0&0&0&0&0&0&0&0&0&0&0&0&0&0&0&20&35&41&35&20&0&-20&-35&-41&-35&-20&0
\end{array}\right].
$}
\medskip

Note that each row of the transformation is cluster specific, that is, each row has non-zero entries in the elements corresponding to only one of the clusters. Applying this transformation to our example, we obtain\\[2mm]
\resizebox{\textwidth}{!}{
\setlength\arraycolsep{5pt}$
\tilde{A}=\left[\begin{array}{ccccc}
A_\parallel & 0 & 0 & 0   & 0 \\
0 & A_{\perp}^{1} & 0 & 0 & 0 \\
0 & 0 & A_{\perp}^{2} & 0 & 0 \\
0 & 0 & 0 & A_{\perp}^{3} & 0 \\
0 & 0 & 0 & 0 & A_{\perp}^{4,\ldots,\textcolor{blue}{39}}
\end{array}\right],
\scriptsize
\quad
\begin{array}{ll}
A_\parallel=
\begin{bmatrix}
6 & 4\sqrt{5}&\sqrt{15}\\
4\sqrt{5}& 6&\sqrt{12}\\
\sqrt{15}&\sqrt{12}&9
\end{bmatrix},
\quad
A_{\perp}^{1}=
\begin{bmatrix}
-2&-9&-3.9\\
-9&-2&3.5\\
-3.9&3.5&1
\end{bmatrix},\quad
A_{\perp}^{2}=A_{\perp}^{3}=
\begin{bmatrix}
-2&0&-3.9\\
0&-2&-3.5\\
-3.9&-3.5&-1
\end{bmatrix}\\[4mm]
A_{\perp}^{4,\ldots,19} = 
\text{diag}(-2.6,-2.6,-1.5,-1.5,-1.3,-1.3,-0.6,-0.6,-0.4,-0.4,0.1,0.1,1.6,1.6,4.7,4.7)\\[2mm]
A_{\perp}^{20,\ldots,31} =\text{diag}(-2.5,-2.5,-1.2,-1.2,-0.3,-0.3,4,4,0,0,0,0)\\[2mm]
A_{\perp}^{32,\ldots,39}
=\text{diag}(0,0,-2.7,-2.7,-2,-2,0.7,0.7)
\end{array}$}
\smallskip

\noindent
where we have highlighted the block structure of the matrix $\tilde A$. Note that the first three rows of $T_{or}$ span the cluster consensus subspace $\mathcal{X}_{or}$. Then we have three sets of three rows of the so called \emph{intertwined} symmetry-breaks \citep{peso14}, that define three $3\times 3$ blocks $A_\perp^1,\ldots,A_\perp^3$ of $\tilde A$ each governing the dynamics along an $A$-invariant subspace. Any one of these blocks is generated by three rows of the matrix $T_{or}$ each specific of a different cluster. The eigenvectors of the matrix $A$ generating these three dimensional invariant subspaces have therefore non-zero entries in all their elements (since they involve all the three clusters/all the nodes of the network). The remaining 36 rows of $T_{or}$ define 36 monodimensional blocks of $\tilde{A}$ ($A_\perp^4,\ldots,A_\perp^{39}$), and are therefore themselves eigenvectors of the matrix $A$. The first 16 are specific of cluster $\C_1$, the next 12 are specific of cluster $\C_2$, and finally the last 8 are specific of cluster $\C_3$.

The transverse non-stable eigenvalues that define $\Lambda^\perp$ are the 16 non-negative monodimensional block of $\tilde A$, together with three other positive eigenvalues, one for each fo the 3x3 blocks of $A_\perp$. As a result \\[2mm]
\resizebox{\textwidth}{!}{
\begin{tikzpicture}
\setlength\arraycolsep{5pt}
\node at (-8.3,12){
$
\Lambda^\perp=\Big[
%\left[\begin{array}{ccccccccccccccccccc}
%9.9 & 3.7 & 3.7 & 0.1 & 0.1 & 1.6 & 1.6 & 4.7 & 4.7 & 4 & 4 & 0 & 0 & 0 & 0 & 0 & 0 & 0.7 & 0.7  
%\end{array}\right]
$};
\node at (8.5,12){$\Big]$};
\node at (-7.2,12){$9.9$};
\node at (-6,12){$3.7$};
\draw [decorate,decoration={brace,amplitude=5pt}] (-6.7,11.5) -- (-5.3,11.5) ;
\node at (-4.2,12){$0.1$};
\draw [decorate,decoration={brace,amplitude=5pt}] (-4.9,11.5) -- (-3.5,11.5) ;
\node at (-2.5,12){$1.6$};
\draw [decorate,decoration={brace,amplitude=5pt}] (-3.2,11.5) -- (-1.8,11.5) ;
\node at (-0.9,12){$4.7$};
\draw [decorate,decoration={brace,amplitude=5pt}] (-1.6,11.5) -- (-0.2,11.5) ;
\node at (0.8,12){$4$};
\draw [decorate,decoration={brace,amplitude=5pt}] (0.1,11.5) -- (1.5,11.5) ;
\node at (4.1,12){$0$};
\draw [decorate,decoration={brace,amplitude=5pt}] (1.8,11.5) -- (6.5,11.5) ;
\node at (7.5,12){$0.7$};
\draw [decorate,decoration={brace,amplitude=5pt}] (6.8,11.5) -- (8.2,11.5) ;
\node at (0,0.2) {
$
\phantom{\Omega_i=}\left[\begin{array}{ccccccccccccccccccc}
    1&  0&  2&  3& -1& -1&  3& -1&  3&  0&  0&  0&  0&  0&  0&  0&  0&  0&  0
\\ -1& -2&  0&  2& -2&  1&  3& -1& -2&  0&  0&  0&  0&  0&  0&  0&  0&  0&  0
\\  1&  0& -2&  1& -3&  2&  2&  3&  0&  0&  0&  0&  0&  0&  0&  0&  0&  0&  0
\\ -1&  2&  0&  0& -3&  3&  0& -2&  3&  0&  0&  0&  0&  0&  0&  0&  0&  0&  0
\\  1&  0&  2& -1& -3&  3& -2&  0& -3&  0&  0&  0&  0&  0&  0&  0&  0&  0&  0
\\ -1& -2&  0& -1& -3&  1& -3&  3&  1&  0&  0&  0&  0&  0&  0&  0&  0&  0&  0
\\  1&  0& -2& -2& -3& -1& -3& -3&  2&  0&  0&  0&  0&  0&  0&  0&  0&  0&  0
\\ -1&  2&  0& -3& -2& -2& -2&  0& -3&  0&  0&  0&  0&  0&  0&  0&  0&  0&  0
\\  1&  0&  2& -3& -1& -3&  0&  2&  2&  0&  0&  0&  0&  0&  0&  0&  0&  0&  0
\\ -1& -2&  0& -3&  0& -3&  2& -3&  1&  0&  0&  0&  0&  0&  0&  0&  0&  0&  0
\\  1&  0& -2& -3&  1& -1&  3&  1& -3&  0&  0&  0&  0&  0&  0&  0&  0&  0&  0
\\ -1&  2&  0& -2&  2&  1&  3&  1&  2&  0&  0&  0&  0&  0&  0&  0&  0&  0&  0
\\  1&  0&  2& -1&  3&  2&  2& -3&  0&  0&  0&  0&  0&  0&  0&  0&  0&  0&  0
\\ -1& -2&  0&  0&  3&  3&  0&  2& -3&  0&  0&  0&  0&  0&  0&  0&  0&  0&  0
\\  1&  0& -2&  1&  3&  3& -2&  0&  3&  0&  0&  0&  0&  0&  0&  0&  0&  0&  0
\\ -1&  2&  0&  1&  3&  1& -3& -3& -1&  0&  0&  0&  0&  0&  0&  0&  0&  0&  0
\\  1&  0&  2&  2&  3& -1& -3&  3& -2&  0&  0&  0&  0&  0&  0&  0&  0&  0&  0
\\ -1& -2&  0&  3&  2& -2& -2&  0&  3&  0&  0&  0&  0&  0&  0&  0&  0&  0&  0
\\  1&  0& -2&  3&  1& -3&  0& -2& -2&  0&  0&  0&  0&  0&  0&  0&  0&  0&  0
\\ -1&  2&  0&  3&  0& -3&  2&  3& -1&  0&  0&  0&  0&  0&  0&  0&  0&  0&  0
\\[-2mm]
\\  2&  0&  2&  0&  0&  0&  0&  0&  0&  1& -3& -3&  3& -2&  2&	0&  0&  0&  0
\\ -2& -2&  0&  0&  0&  0&  0&  0&  0&  3& -2&  1& -1&  2&  4&  0&  0&  0&  0
\\  2&  0& -2&  0&  0&  0&  0&  0&  0&  3& -1& -2& -4& -3&  1&  0&  0&  0&  0
\\ -2&  2&  0&  0&  0&  0&  0&  0&  0&  4&  0&  3&  1& -3&  1&  0&  0&  0&  0
\\  2&  0&  2&  0&  0&  0&  0&  0&  0&  3&  1&  3& -3&  2& -2&  0&  0&  0&  0
\\ -2& -2&  0&  0&  0&  0&  0&  0&  0&  2&  3& -1&  1& -2& -4&  0&  0&  0&  0
\\  2&  0& -2&  0&  0&  0&  0&  0&  0&  1&  3&  2&  4&  3& -1&  0&  0&  0&  0
\\ -2&  2&  0&  0&  0&  0&  0&  0&  0&  0&  4& -3& -1&  3& -1&  0&  0&  0&  0
\\  2&  0&  2&  0&  0&  0&  0&  0&  0& -1&  3& -3&  3& -2&  2&  0&  0&  0&  0
\\ -2& -2&  0&  0&  0&  0&  0&  0&  0& -3&  2&  1& -1&  2&  4&  0&  0&  0&  0
\\  2&  0& -2&  0&  0&  0&  0&  0&  0& -3&  1& -2& -4& -3&  1&  0&  0&  0&  0
\\ -2&  2&  0&  0&  0&  0&  0&  0&  0& -4&  0&  3&  1& -3&  1&  0&  0&  0&  0
\\  2&  0&  2&  0&  0&  0&  0&  0&  0& -3& -1&  3& -3&  2& -2&  0&  0&  0&  0
\\ -2& -2&  0&  0&  0&  0&  0&  0&  0& -2& -3& -1&  1& -2& -4&  0&  0&  0&  0
\\  2&  0& -2&  0&  0&  0&  0&  0&  0& -1& -3&  2&  4&  3& -1&  0&  0&  0&  0
\\ -2&  2&  0&  0&  0&  0&  0&  0&  0&  0& -4& -3& -1&  3& -1&  0&  0&  0&  0
\\[-2mm]
\\  1&  0&  3&  0&  0&  0&  0&  0&  0&  0&  0&  0&  0&  0&  0&  4& -2& -3&  3
\\ -1& -3&  0&  0&  0&  0&  0&  0&  0&  0&  0&  0&  0&  0&  0& -4& -2& -1&  4
\\  1&  0& -3&  0&  0&  0&  0&  0&  0&  0&  0&  0&  0&  0&  0&  0&  4&  1&  4
\\ -1&  3&  0&  0&  0&  0&  0&  0&  0&  0&  0&  0&  0&  0&  0&  4& -2&  3&  3
\\  1&  0&  3&  0&  0&  0&  0&  0&  0&  0&  0&  0&  0&  0&  0& -4& -2&  4&  1
\\ -1& -3&  0&  0&  0&  0&  0&  0&  0&  0&  0&  0&  0&  0&  0&  0&  4&  4& -1
\\  1&  0& -3&  0&  0&  0&  0&  0&  0&  0&  0&  0&  0&  0&  0&  4& -2&  3& -3
\\ -1&  3&  0&  0&  0&  0&  0&  0&  0&  0&  0&  0&  0&  0&  0& -4& -2&  1& -4
\\  1&  0&  3&  0&  0&  0&  0&  0&  0&  0&  0&  0&  0&  0&  0&  0&  4& -1& -4
\\ -1& -3&  0&  0&  0&  0&  0&  0&  0&  0&  0&  0&  0&  0&  0&  4& -2& -3& -3
\\  1&  0& -3&  0&  0&  0&  0&  0&  0&  0&  0&  0&  0&  0&  0& -4& -2& -4& -1
\\ -1&  3&  0&  0&  0&  0&  0&  0&  0&  0&  0&  0&  0&  0&  0&  0&  4& -4&  1
\end{array}\right]
.
$};
\end{tikzpicture}
}
where the brackets associate each $\lambda_i\in \Lambda^{\perp}$ to the eigenvectors obtained according to Remark \ref{rem:c_spec_eigs} and spanning $\Omega_i$.

We are now ready to apply Algorithm \ref{algo:main_algo} to find the driver nodes needed to stabilize $\mathcal{X}_{or}$. 
\begin{itemize}
    \item when $i=1$ we consider the eigenspace of the eigenvalue 9.9. Its dimension is 1, so we need at least one control input and two driver nodes to stabilize it. We select nodes 1 and 2 as drivers and thus $D_{1,1}=1,\ D_{1,2}=-1,\, D_{1,j}=0,\ j=3,\ldots,48$.
    \item when $i=2$ we consider the eigenspace of the eigenvalue 3.7. This eigenspace is two-dimensional, so we need another control input and another driver node to stabilize it. We then add a second (independent) column to the matrix $D$ with $D_{2,1}=1,\ D_{2,4}=-1,\ D_{2,j}=0,\ j=2,3,5,\ldots,48$.
    We then verify that $D$ has now two columns with non-zero and non-parallel projection on the eigenspace associated to the eigenvalue 3.7 by computing the elements
    \[
    \begin{array}{rclcrcl}
    \mathcal{D}^2_{1,1}&=&[1,-1,0][0,-2, 2]^T=2,&\quad&
    \mathcal{D}^2_{1,2}&=&[1,-1,0][2, 0, 0]^T=2,\\
    \mathcal{D}^2_{2,1}&=&[1,0,-1][0,-2, 2]^T=-2,&\quad&
    \mathcal{D}^2_{2,2}&=&[1,0,-1][2, 0, 0]^T=2,
    \end{array}
    \]
    of the matrix $\mathcal{D}^2$ and then verifying that this matrix is full rank as $\det(\mathcal{D}^2)=8\neq 0$.
    \item when $i=3$ we consider the eigenspace associated to the eigenvalue 0.1. Its dimension is 2, and the vectors in $D$ have a two dimensional projection on it as the elements
    \[
    \begin{array}{rclcrcl}
    \mathcal{D}^3_{1,1}&=&[1,-1,0][ 3, 2, 0]^T=1,&\quad&
    \mathcal{D}^3_{1,2}&=&[1,-1,0][-1,-2,-3]^T=1,\\
    \mathcal{D}^3_{2,1}&=&[1,0,-1][ 3, 2, 0]^T=3,&\quad&
    \mathcal{D}^3_{2,2}&=&[1,0,-1][-1,-2,-3]^T=2,
    \end{array}
    \]
    define the matrix $\mathcal{D}^3$ that is is full rank as $\det(\mathcal{D}^3)=-1\neq 0$.  
    \item when $i=4$ we consider the eigenspace associated to the eigenvalue 1.1. It's dimension is 2, and the vectors in $D$ have a two dimensional projection on it as the elements
    \[
    \begin{array}{rclcrcl}
    \mathcal{D}^4_{1,1}&=&[1,-1,0][-1, 1, 3]^T=-2,&\quad&
    \mathcal{D}^4_{1,2}&=&[1,-1,0][ 3, 3, 0]^T=0,\\
    \mathcal{D}^4_{2,1}&=&[1,0,-1][-1, 1, 3]^T=-4,&\quad&
    \mathcal{D}^4_{2,2}&=&[1,0,-1][ 3, 3, 0]^T=3,
    \end{array}
    \]
    define the matrix $\mathcal{D}^4$ that is is full rank as $\det(\mathcal{D}^4)=-6\neq 0$.
    \item when $i=5$ we consider the eigenspace associated to the eigenvalue 4.7. It's dimension is 2, and the vectors in $D$ have a two dimensional projection on it as the elements
    \[
    \begin{array}{rclcrcl}
    \mathcal{D}^5_{1,1}&=&[1,-1,0][-1,-1,-2]^T=0,&\quad&
    \mathcal{D}^5_{1,2}&=&[1,-1,0][3, -2, 3]^T=5,\\
    \mathcal{D}^5_{2,1}&=&[1,0,-1][-1,-1,-2]^T=1,&\quad&
    \mathcal{D}^5_{2,2}&=&[1,0,-1][ 3,-2, 3]^T=0,
    \end{array}
    \]
    define the matrix $\mathcal{D}^5$ that is is full rank being $\det(\mathcal{D}^5)=-5\neq 0$.
    \item the eigenspaces $\Omega_i$ when $i\geq 6$ have 0 components on cluster $\mathcal{C}_1$. As a consequence, we need to select additional drivers from the other clusters in order to stabilize them. In particular, for $i=6$ we consider the eigenspace associated to the eigenvalue 4. It's dimension is 2, and so we need at least 3 driver nodes in the cluster $\mathcal{C}_2$ in order to have a two dimensional projection on it. We then select $D_{3,21}=1,\ D_{3,22}=-1,\ D_{3,j}=0\  j=1,\ldots,20,23,\ldots,48$ and $D_{4,21}=1,\ D_{4,23}=-1,\  D_{4,j}=0,\ j=1,\ldots,20,22,24,\ldots,48$. This achieves our goal as the elements
    \[
    \begin{array}{rclcrcl}
    \mathcal{D}^6_{1,1}&=&[1,-1,0][ 1, 3, 3]^T=-2,&\quad&
    \mathcal{D}^6_{1,2}&=&[1,-1,0][-3,-2,-1]^T=1,\\
    \mathcal{D}^6_{2,1}&=&[1,0,-1][ 1, 3, 3]^T=-2,&\quad&
    \mathcal{D}^6_{2,2}&=&[1,0,-1][-3,-2,-1]^T=2,
    \end{array}
    \]
    define the matrix $\mathcal{D}^6$ that is is full rank being $\det(\mathcal{D}^6)=-2\neq 0$.
    \item when $i=7$ we consider the eigenspace associated to the eigenvalue 0. It's dimension is 6, but we can treat separately the first 4 eigenvectors, associated to cluster $\mathcal{C}_2$ and thus spanning $\Omega_7^2$, from the other 2 eigenvectors, associated to cluster $\mathcal{C}_3$ and thus spanning $\Omega_7^2$. As $|\Omega_7^2|=4$, we need to select two additional driver nodes for the matrix $D$ to have four columns with nonzero and non-parallel projection on it. We therefore select nodes 24 and 25 as drivers by adding to $D$ the columns $D_{5,21}=1,\ D_{5,24}=-1,\ D_{5,j}=0\  j=1,\ldots,20,22,23,25,\ldots,48$ and $D_{6,21}=1,\ D_{6,25}=-1, D_{6,j}=0\ j=1,\ldots,20,22,23,24,26,\ldots,48$. As the matrix
    \[
    \mathcal{D}^7=
    \begin{bmatrix}
    1&-1&0&0&0\\
    1&0&-1&0&0\\
    1&0&0&-1&0\\
    1&0&0&0&-1
    \end{bmatrix}
    \begin{bmatrix}
    -3&3 &-2& 2\\
     1&-1& 2& 4\\
    -2&-4&-3& 1\\
     3& 1&-3& 1\\
     3&-3& 2&-2
    \end{bmatrix} =
    \begin{bmatrix}
    -4& 4&-4&-2\\
    -1& 7& 1& 1\\
    -6& 2& 1& 1\\
    -6& 6&-4& 4
    \end{bmatrix}
    \]
    is full rank, then the matrix $D$ has now four columns with nonzero and non parallel projection on $|\Omega_7^2|=4$. \\
    Then, we turn our attention to $\Omega_7^3$ noting that $|\Omega_7^3|=2$. We therefore need to select three additional drivers defined by $D_{7,37}=1,\ D_{3,38}=-1,\  D_{7,j}=0,\ j=1,\ldots,36,29,\ldots,48$ and $D_{8,37}=1,\ D_{8,39}=-1, D_{8,j}=0\ j=1,\ldots,36,38,40,\ldots,48$. As the matrix
    \[
    \mathcal{D'}^7=
    \begin{bmatrix}
    1&-1&0\\
    1&0&-1
    \end{bmatrix}
    \begin{bmatrix}
     4&-2\\
    -4&-2
     0& 4
    \end{bmatrix} =
    \begin{bmatrix}
     8& 0 \\ 4&-6
    \end{bmatrix}
    \]
    is full rank being $\det(\mathcal{D'}^7)=-48\neq 0$, then the matrix $D$ has now two columns with nonzero and non-parallel projection on $\Omega_7^3$.
    \item Our procedure ends with 
    iteration $i=8$ in which we consider the eigenspace associated to the eigenvalue 0.7. Note that the matrix $D$ already has two columns with nonzero and non-parallel projection on it, namely $D_7$ and $D_8$ as the matrix
    \[
    \det(\mathcal{D}^8)=
    \det\left(\begin{bmatrix}
    1&-1&0\\
    1&0&-1
    \end{bmatrix}
    \begin{bmatrix}
    -3& 3\\
    -1& 4\\
     1& 4
    \end{bmatrix}\right) =
    \det\left(\begin{bmatrix}
     -2&-1 \\-4&-1
    \end{bmatrix}\right)=-2\neq 0
    \]
\end{itemize}
is full rank.
Note that our selection achieved bound on the number of driver nodes given in Corollary \ref{cor:min_drivers_b}, but not the minimum number of inputs (that are 6, applying Corollary \ref{cor:min_inputs}). This last achievement can easily be obtained replacing $D_1$ and $D_2$ with $D_1+D_7$ and $D_2+D_8$, and then removing $D_7$ and $D_8$ from $D$.

\bibliography{biblio_b.bib}

\begin{thebibliography}{}

\bibitem [\protect \citeauthoryear {%
Aguilar%
\ \BBA {} Gharesifard%
}{%
Aguilar%
\ \BBA {} Gharesifard%
}{%
{\protect \APACyear {2017}}%
}]{%
AgGh:17}
\APACinsertmetastar {%
AgGh:17}%
\begin{APACrefauthors}%
Aguilar, C\BPBI O.%
\BCBT {}\ \BBA {} Gharesifard, B.%
\end{APACrefauthors}%
\unskip\
\newblock
\APACrefYearMonthDay{2017}{}{}.
\newblock
{\BBOQ}\APACrefatitle {Almost equitable partitions and new necessary conditions
  for network controllability} {Almost equitable partitions and new necessary
  conditions for network controllability}.{\BBCQ}
\newblock
\APACjournalVolNumPages{Automatica}{80}{}{25--31}.
\PrintBackRefs{\CurrentBib}

\bibitem [\protect \citeauthoryear {%
Blaha%
\ \protect \BOthers {.}}{%
Blaha%
\ \protect \BOthers {.}}{%
{\protect \APACyear {2019}}%
}]{%
BlHu:19}
\APACinsertmetastar {%
BlHu:19}%
\begin{APACrefauthors}%
Blaha, K\BPBI A.%
, Huang, K.%
, Della~Rossa, F.%
, Pecora, L.%
, Hossein-Zadeh, M.%
\BCBL {}\ \BBA {} Sorrentino, F.%
\end{APACrefauthors}%
\unskip\
\newblock
\APACrefYearMonthDay{2019}{}{}.
\newblock
{\BBOQ}\APACrefatitle {Cluster Synchronization in Multilayer Networks: A Fully
  Analog Experiment with L C Oscillators with Physically Dissimilar Coupling}
  {Cluster synchronization in multilayer networks: A fully analog experiment
  with l c oscillators with physically dissimilar coupling}.{\BBCQ}
\newblock
\APACjournalVolNumPages{Physical review letters}{122}{1}{014101}.
\PrintBackRefs{\CurrentBib}

\bibitem [\protect \citeauthoryear {%
Chapman%
\ \BBA {} Mesbahi%
}{%
Chapman%
\ \BBA {} Mesbahi%
}{%
{\protect \APACyear {2014}}%
}]{%
ChMe:14}
\APACinsertmetastar {%
ChMe:14}%
\begin{APACrefauthors}%
Chapman, A.%
\BCBT {}\ \BBA {} Mesbahi, M.%
\end{APACrefauthors}%
\unskip\
\newblock
\APACrefYearMonthDay{2014}{}{}.
\newblock
{\BBOQ}\APACrefatitle {On symmetry and controllability of multi-agent systems}
  {On symmetry and controllability of multi-agent systems}.{\BBCQ}
\newblock
\BIn{} \APACrefbtitle {53rd IEEE Conference on Decision and Control} {53rd ieee
  conference on decision and control}\ (\BPGS\ 625--630).
\PrintBackRefs{\CurrentBib}

\bibitem [\protect \citeauthoryear {%
Chapman%
\ \BBA {} Mesbahi%
}{%
Chapman%
\ \BBA {} Mesbahi%
}{%
{\protect \APACyear {2015}}%
}]{%
ChMe:15}
\APACinsertmetastar {%
ChMe:15}%
\begin{APACrefauthors}%
Chapman, A.%
\BCBT {}\ \BBA {} Mesbahi, M.%
\end{APACrefauthors}%
\unskip\
\newblock
\APACrefYearMonthDay{2015}{}{}.
\newblock
{\BBOQ}\APACrefatitle {State controllability, output controllability and
  stabilizability of networks: A symmetry perspective} {State controllability,
  output controllability and stabilizability of networks: A symmetry
  perspective}.{\BBCQ}
\newblock
\BIn{} \APACrefbtitle {2015 54th IEEE Conference on Decision and Control (CDC)}
  {2015 54th ieee conference on decision and control (cdc)}\ (\BPGS\
  4776--4781).
\PrintBackRefs{\CurrentBib}

\bibitem [\protect \citeauthoryear {%
De~Lellis%
, Di~Meglio%
\BCBL {}\ \BBA {} {Lo Iudice}%
}{%
De~Lellis%
\ \protect \BOthers {.}}{%
{\protect \APACyear {2018}}%
}]{%
DeDi:18}
\APACinsertmetastar {%
DeDi:18}%
\begin{APACrefauthors}%
De~Lellis, P.%
, Di~Meglio, A.%
\BCBL {}\ \BBA {} {Lo Iudice}, F.%
\end{APACrefauthors}%
\unskip\
\newblock
\APACrefYearMonthDay{2018}{}{}.
\newblock
{\BBOQ}\APACrefatitle {Overconfident agents and evolving financial networks}
  {Overconfident agents and evolving financial networks}.{\BBCQ}
\newblock
\APACjournalVolNumPages{Nonlinear Dynamics}{92}{1}{33--40}.
\PrintBackRefs{\CurrentBib}

\bibitem [\protect \citeauthoryear {%
Gambuzza%
\ \BBA {} Frasca%
}{%
Gambuzza%
\ \BBA {} Frasca%
}{%
{\protect \APACyear {2019}}%
}]{%
GaFr:19}
\APACinsertmetastar {%
GaFr:19}%
\begin{APACrefauthors}%
Gambuzza, L\BPBI V.%
\BCBT {}\ \BBA {} Frasca, M.%
\end{APACrefauthors}%
\unskip\
\newblock
\APACrefYearMonthDay{2019}{}{}.
\newblock
{\BBOQ}\APACrefatitle {A criterion for stability of cluster synchronization in
  networks with external equitable partitions} {A criterion for stability of
  cluster synchronization in networks with external equitable
  partitions}.{\BBCQ}
\newblock
\APACjournalVolNumPages{Automatica}{100}{}{212--218}.
\PrintBackRefs{\CurrentBib}

\bibitem [\protect \citeauthoryear {%
Gatto%
\ \protect \BOthers {.}}{%
Gatto%
\ \protect \BOthers {.}}{%
{\protect \APACyear {2020}}%
}]{%
GaBe:20}
\APACinsertmetastar {%
GaBe:20}%
\begin{APACrefauthors}%
Gatto, M.%
, Bertuzzo, E.%
, Mari, L.%
, Miccoli, S.%
, Carraro, L.%
, Casagrandi, R.%
\BCBL {}\ \BBA {} Rinaldo, A.%
\end{APACrefauthors}%
\unskip\
\newblock
\APACrefYearMonthDay{2020}{}{}.
\newblock
{\BBOQ}\APACrefatitle {Spread and dynamics of the COVID-19 epidemic in Italy:
  Effects of emergency containment measures} {Spread and dynamics of the
  covid-19 epidemic in italy: Effects of emergency containment
  measures}.{\BBCQ}
\newblock
\APACjournalVolNumPages{Proceedings of the National Academy of Sciences}{}{}{}.
\PrintBackRefs{\CurrentBib}

\bibitem [\protect \citeauthoryear {%
Godsil%
}{%
Godsil%
}{%
{\protect \APACyear {1997}}%
}]{%
Godsil:97}
\APACinsertmetastar {%
Godsil:97}%
\begin{APACrefauthors}%
Godsil, C\BPBI D.%
\end{APACrefauthors}%
\unskip\
\newblock
\APACrefYearMonthDay{1997}{}{}.
\newblock
{\BBOQ}\APACrefatitle {Compact graphs and equitable partitions} {Compact graphs
  and equitable partitions}.{\BBCQ}
\newblock
\APACjournalVolNumPages{Linear Algebra and its
  Applications}{255}{1-3}{259--266}.
\PrintBackRefs{\CurrentBib}

\bibitem [\protect \citeauthoryear {%
Hautus%
}{%
Hautus%
}{%
{\protect \APACyear {1970}}%
}]{%
PBH:70}
\APACinsertmetastar {%
PBH:70}%
\begin{APACrefauthors}%
Hautus, M.%
\end{APACrefauthors}%
\unskip\
\newblock
\APACrefYearMonthDay{1970}{}{}.
\newblock
{\BBOQ}\APACrefatitle {Stabilization controllability and observability of
  linear autonomous systems} {Stabilization controllability and observability
  of linear autonomous systems}.{\BBCQ}
\newblock
\BIn{} \APACrefbtitle {Indagationes mathematicae (proceedings)} {Indagationes
  mathematicae (proceedings)}\ (\BVOL~73, \BPGS\ 448--455).
\PrintBackRefs{\CurrentBib}

\bibitem [\protect \citeauthoryear {%
Klickstein%
, Pecora%
\BCBL {}\ \BBA {} Sorrentino%
}{%
Klickstein%
\ \protect \BOthers {.}}{%
{\protect \APACyear {2019}}%
}]{%
KlPeSo:19}
\APACinsertmetastar {%
KlPeSo:19}%
\begin{APACrefauthors}%
Klickstein, I.%
, Pecora, L.%
\BCBL {}\ \BBA {} Sorrentino, F.%
\end{APACrefauthors}%
\unskip\
\newblock
\APACrefYearMonthDay{2019}{}{}.
\newblock
{\BBOQ}\APACrefatitle {Symmetry induced group consensus} {Symmetry induced
  group consensus}.{\BBCQ}
\newblock
\APACjournalVolNumPages{Chaos: An Interdisciplinary Journal of Nonlinear
  Science}{29}{7}{073101}.
\PrintBackRefs{\CurrentBib}

\bibitem [\protect \citeauthoryear {%
Klickstein%
\ \BBA {} Sorrentino%
}{%
Klickstein%
\ \BBA {} Sorrentino%
}{%
{\protect \APACyear {2018}}%
}]{%
klickstein2018generating}
\APACinsertmetastar {%
klickstein2018generating}%
\begin{APACrefauthors}%
Klickstein, I.%
\BCBT {}\ \BBA {} Sorrentino, F.%
\end{APACrefauthors}%
\unskip\
\newblock
\APACrefYearMonthDay{2018}{}{}.
\newblock
{\BBOQ}\APACrefatitle {Generating symmetric graphs} {Generating symmetric
  graphs}.{\BBCQ}
\newblock
\APACjournalVolNumPages{Chaos: An Interdisciplinary Journal of Nonlinear
  Science}{28}{12}{121102}.
\PrintBackRefs{\CurrentBib}

\bibitem [\protect \citeauthoryear {%
Lin%
}{%
Lin%
}{%
{\protect \APACyear {1974}}%
}]{%
Lin:74}
\APACinsertmetastar {%
Lin:74}%
\begin{APACrefauthors}%
Lin, C\BHBI T.%
\end{APACrefauthors}%
\unskip\
\newblock
\APACrefYearMonthDay{1974}{}{}.
\newblock
{\BBOQ}\APACrefatitle {Structural controllability} {Structural
  controllability}.{\BBCQ}
\newblock
\APACjournalVolNumPages{IEEE Transactions on Automatic
  Control}{19}{3}{201--208}.
\PrintBackRefs{\CurrentBib}

\bibitem [\protect \citeauthoryear {%
Liu%
, Slotine%
\BCBL {}\ \BBA {} Barab{\'a}si%
}{%
Liu%
\ \protect \BOthers {.}}{%
{\protect \APACyear {2011}}%
}]{%
LiSl:11}
\APACinsertmetastar {%
LiSl:11}%
\begin{APACrefauthors}%
Liu, Y\BHBI Y.%
, Slotine, J\BHBI J.%
\BCBL {}\ \BBA {} Barab{\'a}si, A\BHBI L.%
\end{APACrefauthors}%
\unskip\
\newblock
\APACrefYearMonthDay{2011}{}{}.
\newblock
{\BBOQ}\APACrefatitle {Controllability of complex networks} {Controllability of
  complex networks}.{\BBCQ}
\newblock
\APACjournalVolNumPages{nature}{473}{7346}{167--173}.
\PrintBackRefs{\CurrentBib}

\bibitem [\protect \citeauthoryear {%
{Lo Iudice}%
, Garofalo%
\BCBL {}\ \BBA {} Sorrentino%
}{%
{Lo Iudice}%
\ \protect \BOthers {.}}{%
{\protect \APACyear {2015}}%
}]{%
LoGa:15}
\APACinsertmetastar {%
LoGa:15}%
\begin{APACrefauthors}%
{Lo Iudice}, F.%
, Garofalo, F.%
\BCBL {}\ \BBA {} Sorrentino, F.%
\end{APACrefauthors}%
\unskip\
\newblock
\APACrefYearMonthDay{2015}{}{}.
\newblock
{\BBOQ}\APACrefatitle {Structural permeability of complex networks to control
  signals} {Structural permeability of complex networks to control
  signals}.{\BBCQ}
\newblock
\APACjournalVolNumPages{Nature communications}{6}{1}{1--6}.
\PrintBackRefs{\CurrentBib}

\bibitem [\protect \citeauthoryear {%
{Lo Iudice}%
, Sorrentino%
\BCBL {}\ \BBA {} Garofalo%
}{%
{Lo Iudice}%
\ \protect \BOthers {.}}{%
{\protect \APACyear {2019}}%
}]{%
LoSo:19}
\APACinsertmetastar {%
LoSo:19}%
\begin{APACrefauthors}%
{Lo Iudice}, F.%
, Sorrentino, F.%
\BCBL {}\ \BBA {} Garofalo, F.%
\end{APACrefauthors}%
\unskip\
\newblock
\APACrefYearMonthDay{2019}{}{}.
\newblock
{\BBOQ}\APACrefatitle {On node controllability and observability in complex
  dynamical networks} {On node controllability and observability in complex
  dynamical networks}.{\BBCQ}
\newblock
\APACjournalVolNumPages{IEEE Control Systems Letters}{3}{4}{847--852}.
\PrintBackRefs{\CurrentBib}

\bibitem [\protect \citeauthoryear {%
Pasqualetti%
, Zampieri%
\BCBL {}\ \BBA {} Bullo%
}{%
Pasqualetti%
\ \protect \BOthers {.}}{%
{\protect \APACyear {2014}}%
}]{%
PaZa:14}
\APACinsertmetastar {%
PaZa:14}%
\begin{APACrefauthors}%
Pasqualetti, F.%
, Zampieri, S.%
\BCBL {}\ \BBA {} Bullo, F.%
\end{APACrefauthors}%
\unskip\
\newblock
\APACrefYearMonthDay{2014}{}{}.
\newblock
{\BBOQ}\APACrefatitle {Controllability metrics, limitations and algorithms for
  complex networks} {Controllability metrics, limitations and algorithms for
  complex networks}.{\BBCQ}
\newblock
\APACjournalVolNumPages{IEEE Transactions on Control of Network
  Systems}{1}{1}{40--52}.
\PrintBackRefs{\CurrentBib}

\bibitem [\protect \citeauthoryear {%
Pecora%
, Sorrentino%
, Hagerstrom%
, Murphy%
\BCBL {}\ \BBA {} Roy%
}{%
Pecora%
\ \protect \BOthers {.}}{%
{\protect \APACyear {2014}}%
}]{%
peso14}
\APACinsertmetastar {%
peso14}%
\begin{APACrefauthors}%
Pecora, L.%
, Sorrentino, F.%
, Hagerstrom, A.%
, Murphy, T\BPBI E.%
\BCBL {}\ \BBA {} Roy, R.%
\end{APACrefauthors}%
\unskip\
\newblock
\APACrefYearMonthDay{2014}{}{}.
\newblock
{\BBOQ}\APACrefatitle {Cluster synchronization and isolated desynchronization
  in complex networks with symmetries} {Cluster synchronization and isolated
  desynchronization in complex networks with symmetries}.{\BBCQ}
\newblock
\APACjournalVolNumPages{Nature communications}{5}{1}{1--8}.
\PrintBackRefs{\CurrentBib}

\bibitem [\protect \citeauthoryear {%
Proskurnikov%
, Matveev%
\BCBL {}\ \BBA {} Cao%
}{%
Proskurnikov%
\ \protect \BOthers {.}}{%
{\protect \APACyear {2015}}%
}]{%
PrMa:15}
\APACinsertmetastar {%
PrMa:15}%
\begin{APACrefauthors}%
Proskurnikov, A\BPBI V.%
, Matveev, A\BPBI S.%
\BCBL {}\ \BBA {} Cao, M.%
\end{APACrefauthors}%
\unskip\
\newblock
\APACrefYearMonthDay{2015}{}{}.
\newblock
{\BBOQ}\APACrefatitle {Opinion dynamics in social networks with hostile camps:
  Consensus vs. polarization} {Opinion dynamics in social networks with hostile
  camps: Consensus vs. polarization}.{\BBCQ}
\newblock
\APACjournalVolNumPages{IEEE Transactions on Automatic
  Control}{61}{6}{1524--1536}.
\PrintBackRefs{\CurrentBib}

\bibitem [\protect \citeauthoryear {%
Sethi%
, Eargle%
, Black%
\BCBL {}\ \BBA {} Luthey-Schulten%
}{%
Sethi%
\ \protect \BOthers {.}}{%
{\protect \APACyear {2009}}%
}]{%
SeEa:09}
\APACinsertmetastar {%
SeEa:09}%
\begin{APACrefauthors}%
Sethi, A.%
, Eargle, J.%
, Black, A\BPBI A.%
\BCBL {}\ \BBA {} Luthey-Schulten, Z.%
\end{APACrefauthors}%
\unskip\
\newblock
\APACrefYearMonthDay{2009}{}{}.
\newblock
{\BBOQ}\APACrefatitle {Dynamical networks in tRNA: protein complexes}
  {Dynamical networks in trna: protein complexes}.{\BBCQ}
\newblock
\APACjournalVolNumPages{Proceedings of the National Academy of
  Sciences}{106}{16}{6620--6625}.
\PrintBackRefs{\CurrentBib}

\bibitem [\protect \citeauthoryear {%
Stegink%
, De~Persis%
\BCBL {}\ \BBA {} van~der Schaft%
}{%
Stegink%
\ \protect \BOthers {.}}{%
{\protect \APACyear {2016}}%
}]{%
StTj:16}
\APACinsertmetastar {%
StTj:16}%
\begin{APACrefauthors}%
Stegink, T.%
, De~Persis, C.%
\BCBL {}\ \BBA {} van~der Schaft, A.%
\end{APACrefauthors}%
\unskip\
\newblock
\APACrefYearMonthDay{2016}{}{}.
\newblock
{\BBOQ}\APACrefatitle {A unifying energy-based approach to stability of power
  grids with market dynamics} {A unifying energy-based approach to stability of
  power grids with market dynamics}.{\BBCQ}
\newblock
\APACjournalVolNumPages{IEEE Transactions on Automatic
  Control}{62}{6}{2612--2622}.
\PrintBackRefs{\CurrentBib}

\bibitem [\protect \citeauthoryear {%
Van~Vreeswijk%
\ \BBA {} Sompolinsky%
}{%
Van~Vreeswijk%
\ \BBA {} Sompolinsky%
}{%
{\protect \APACyear {1996}}%
}]{%
VaSo:96}
\APACinsertmetastar {%
VaSo:96}%
\begin{APACrefauthors}%
Van~Vreeswijk, C.%
\BCBT {}\ \BBA {} Sompolinsky, H.%
\end{APACrefauthors}%
\unskip\
\newblock
\APACrefYearMonthDay{1996}{}{}.
\newblock
{\BBOQ}\APACrefatitle {Chaos in neuronal networks with balanced excitatory and
  inhibitory activity} {Chaos in neuronal networks with balanced excitatory and
  inhibitory activity}.{\BBCQ}
\newblock
\APACjournalVolNumPages{Science}{274}{5293}{1724--1726}.
\PrintBackRefs{\CurrentBib}

\bibitem [\protect \citeauthoryear {%
Yu%
\ \protect \BOthers {.}}{%
Yu%
\ \protect \BOthers {.}}{%
{\protect \APACyear {2012}}%
}]{%
YuZh:12}
\APACinsertmetastar {%
YuZh:12}%
\begin{APACrefauthors}%
Yu, T.%
, Zhou, B.%
, Chan, K.%
, Yuan, Y.%
, Yang, B.%
\BCBL {}\ \BBA {} Wu, Q.%
\end{APACrefauthors}%
\unskip\
\newblock
\APACrefYearMonthDay{2012}{}{}.
\newblock
{\BBOQ}\APACrefatitle {R ($\lambda$) imitation learning for automatic
  generation control of interconnected power grids} {R ($\lambda$) imitation
  learning for automatic generation control of interconnected power
  grids}.{\BBCQ}
\newblock
\APACjournalVolNumPages{Automatica}{48}{9}{2130--2136}.
\PrintBackRefs{\CurrentBib}

\bibitem [\protect \citeauthoryear {%
Yuan%
, Zhao%
, Di%
, Wang%
\BCBL {}\ \BBA {} Lai%
}{%
Yuan%
\ \protect \BOthers {.}}{%
{\protect \APACyear {2013}}%
}]{%
YuZh:13}
\APACinsertmetastar {%
YuZh:13}%
\begin{APACrefauthors}%
Yuan, Z.%
, Zhao, C.%
, Di, Z.%
, Wang, W\BHBI X.%
\BCBL {}\ \BBA {} Lai, Y\BHBI C.%
\end{APACrefauthors}%
\unskip\
\newblock
\APACrefYearMonthDay{2013}{}{}.
\newblock
{\BBOQ}\APACrefatitle {Exact controllability of complex networks} {Exact
  controllability of complex networks}.{\BBCQ}
\newblock
\APACjournalVolNumPages{Nature communications}{4}{1}{1--9}.
\PrintBackRefs{\CurrentBib}

\end{thebibliography}
\bibliographystyle{apacite}
\end{document}